\newtheorem{theorem}{Theorem}[section]
\newtheorem{cor}{Corollary}[section]
\newtheorem{lemma}{Lemma}[section]
\newtheorem{prop}{Proposition}[section]
\theoremstyle{definition}
\newtheorem{rem}{Remark}[section]
\begin{document}

\title{Isogonal conjugation in isosceles tetrahedron}
\author{Saro Harutyunyan}
\address{Saro Harutyunyan, Yerevan, Armenia 0012}
\email{saro.harutyunyan@gmail.com}

\subjclass[2020]{51M04, 51M20}
\keywords{isosceles tetrahedron, isogonal conjugate, inversion, angle between circles/spheres, hyperbolic paraboloid}

\begin{abstract}
In this article we investigate the properties of isogonal conjugation in isosceles tetrahedron.
Particularly we reveal three hyperbolic paraboloids each of which is formed by pairs of isogonal conjugate points symmetric in the respective bimedian, as well as we prove that the circumsphere of an isosceles tetrahedron is invariant under isogonal conjugation in that tetrahedron.
\end{abstract}

\maketitle

\section{Introduction}

Tetrahedron \(ABCD\) is called \textit{isosceles} (or \textit{equihedral}) if its opposite edges are equal, i.e. \(AB=CD,\ BC=AD,\ AC=BD\). This type of tetrahedron is already well-investigated. One may refer to \cite{W} for a list of its known properties. For the purposes of this paper we will need only a few of these properties which will be discussed in Section~\ref{sect:iso-tetr}.

One may note that the isosceles tetrahedron is kind of a generalization of the equilateral triangle and is inherently ``symmetric" so it should have a ``center" that will coincide with its circumcenter, incenter and centroid. Actually this is true for the high-dimensional analogue of isosceles tetrahedron too, see \cite{E05} and~\cite{EMH05}.

We will need the notion of \emph{isogonal conjugation} with respect to a polyhedron. This is the natural generalization of this transformation for polygons. First, for a given dihedron \(\mathcal D\) with edgeline \(e\) and a point \(P\) define the \emph{isogonal plane} of \(Pe\) in \(\mathcal D\) as the plane symmetric to \(Pe\) with respect to the bisector plane of \(\mathcal D\) (if \(P\) lies on \(e\) then the plane \(Pe\), as well as its isogonal can be any plane through \(e\)). Then, for a given polyhedron \(\mathcal P\) and a point \(P\) define the \emph{isogonal conjugate} of \(P\) in \(\mathcal P\) as the point \(Q\) (in case of existence) so that \(P\) and \(Q\) lie in isogonal planes in each dihedron of \(\mathcal P\).
Obviously if \(P\) is the isogonal conjugate of \(Q\), then \(Q\) is the isogonal conjugate of \(P\).

Isogonal conjugation is well-defined for an arbitrary tetrahedron, that is, any point of space has an isogonal conjugate with respect to tetrahedron. On the other hand this is not the case with other polyhedra, there might be only a few or not even a single point which have isogonal conjugates. Anyway we will only work with tetrahedron and will give a proof of the first sentence of this paragraph in Section~\ref{sect:isogonality}.

Section~\ref{sect:aux-facts} will address several auxiliary facts, mainly concerning circles and spheres, which will be leveraged later.

One of the main results of this paper will be presented in Section~\ref{sect:iso-sym-bimed} where we consider isogonal conjugate pairs symmetric in bimedians of isosceles tetrahedron. We prove that they lie on three hyperbolic paraboloids. This is the only section where we will utilize coordinate bashing opposed to the geometric techniques used elsewhere.

The other major result of this paper is that the circumsphere (except for the vertices) of isosceles tetrahedron is invariant with respect to isogonal conjugation. In other words, the isogonal conjugate of any point of the circumsphere of an isosceles tetrahedron other than a vertex lies on its circumsphere. This will be proved in Section~\ref{sect:iso-conj-sphere}.

Interestingly, the two-dimensional case of isosceles tetrahedron,\break
namely the equilateral triangle, does not have any similar property. Moreover, the isogonal conjugate of any point \(P\notin\{A, B, C\}\) of the circumcircle of \emph{any} triangle \(ABC\) is ``infinite", i.e. the isogonals of \(AP, BP, CP\) are parallel.

\section{Properties of isosceles tetrahedron}
\label{sect:iso-tetr}

\begin{prop}
    Isosceles tetrahedron has the following properties:
    \begin{enumerate}[label=(\roman*)]
        \item \label{i} any of its edges is seen in equal angles from the other two vertices and its faces are congruent triangles (hence the name equihedral);
        \item \label{ii} its circumcenter and incenter coincide;
        % https://artofproblemsolving.com/community/c6h2427819p20021868
        \item \label{iii} its faces are acute triangles.
    \end{enumerate}
\end{prop}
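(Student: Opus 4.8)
The plan is to prove all three items at once by showing that every isosceles tetrahedron is the tetrahedron spanned by four pairwise non-adjacent vertices of a rectangular parallelepiped (box), and then reading the statements off this model.

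First I would set up the model. For an arbitrary tetrahedron $ABCD$ introduce the three bimedian vectors $\vec u=\tfrac12(\overrightarrow{AB}+\overrightarrow{CD})$, $\vec v=\tfrac12(\overrightarrow{AB}+\overrightarrow{DC})$, $\vec w=\tfrac12(\overrightarrow{AC}+\overrightarrow{BD})$, i.e. the vectors joining midpoints of opposite edges. A one-line computation gives $\overrightarrow{AB}=\vec u+\vec v$, $\overrightarrow{AC}=\vec v+\vec w$, $\overrightarrow{AD}=\vec u+\vec w$, so $A,B,C,D$ are four pairwise non-adjacent vertices of the parallelepiped with vertex $A$ and edge vectors $\vec u,\vec v,\vec w$. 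Since $AB^2-CD^2=|\vec u+\vec v|^2-|\vec u-\vec v|^2=4\,\vec u\cdot\vec v$ and cyclically, the tetrahedron is isosceles exactly when $\vec u,\vec v,\vec w$ are mutually orthogonal, i.e. when the parallelepiped is a rectangular box; moreover the tetrahedron is non-degenerate iff $\vec u,\vec v,\vec w$ are linearly independent, so in the isosceles case $U:=|\vec u|^2$, $V:=|\vec v|^2$, $W:=|\vec w|^2$ are all strictly positive.

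Granting this, (i) and (iii) are immediate. Using orthogonality, every face of $ABCD$ has squared side lengths exactly $\{U+V,\,V+W,\,W+U\}$: e.g. $AB^2=|\vec u+\vec v|^2=U+V$, $BC^2=|\vec w-\vec u|^2=W+U$, $CA^2=|\vec v+\vec w|^2=V+W$, and the same triple recurs for $ABD$, $ACD$, $BCD$. Hence the four faces are congruent by SSS, which is half of (i); for the other half, each edge lies in exactly two of these congruent triangles, which share that edge, so the angle opposite it — the angle under which the edge is seen from the remaining vertex — is the same in both. For (iii), the triangle with squared sides $U+V,V+W,W+U$ is acute because $(U+V)+(V+W)-(W+U)=2V>0$ and cyclically, so by the law of cosines all three of its angles are acute (their cosines are proportional to the positive numbers $V,W,U$).

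For (ii), let $O=A+\tfrac12(\vec u+\vec v+\vec w)$ be the centre of the box. Orthogonality gives that every vertex of the box, in particular $A,B,C,D$, is at distance $\tfrac12\sqrt{U+V+W}$ from $O$, so $O$ is the circumcentre; on the other hand $O=\tfrac14(A+B+C+D)$ is the centroid $G$. Finally, the four faces have a common area $S$ (being congruent), so the inradius is $\rho=3\,\mathrm{Vol}(ABCD)/(4S)$, while the distance from $G$ to the plane of any face equals $\tfrac14$ of the distance from the opposite vertex to that plane, i.e. $\tfrac14\cdot 3\,\mathrm{Vol}(ABCD)/S=\rho$ — the same number for all four faces — so $G$ is also the incentre. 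Since circumcentre, centroid and incentre all equal $O$, part (ii) follows. The only step needing real care is the first paragraph: checking that the bimedian vectors genuinely span a parallelepiped through $A,B,C,D$, that the equal-opposite-edges condition is equivalent to their mutual orthogonality, and — the point that upgrades (iii) from non-obtuse to genuinely acute — that non-degeneracy forces $U,V,W>0$; everything afterwards is routine bookkeeping in the box.
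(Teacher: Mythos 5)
Your proof is correct, but it takes a genuinely different route from the paper's. You establish up front that a tetrahedron is isosceles exactly when its bimedian vectors are mutually orthogonal, i.e.\ when it is spanned by four alternate vertices of a rectangular box, and then read everything off that model: the faces all have squared side lengths $\{U+V,\,V+W,\,W+U\}$, giving (i) by SSS; the law of cosines gives cosines proportional to $V,W,U>0$, giving (iii); and the box centre is simultaneously the circumcentre and the centroid, which is then equidistant from the four equal-area faces, giving (ii). The paper instead handles the three items separately and synthetically: (i) is dismissed as obvious from the equality of opposite edges; (ii) follows from a locus argument on the circumsphere (the equality $\angle ADB=\angle ACB$ forces the planes $ADB$ and $ACB$ to be symmetric in $ABO$, so $O$ lies on every bisector plane); and (iii) is a proof by contradiction via the point $D'$ completing the parallelogram $ABCD'$ and the chain $BD<BM+MD'=BD'\le AC$. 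Your unified treatment is arguably tidier and in fact anticipates the coordinate embedding~\eqref{eq:embed-tetr} that the paper only introduces in Section~\ref{sect:iso-sym-bimed} --- that embedding is precisely the box model with half-edge vectors along the axes --- at the cost of a paragraph of vector bookkeeping up front; the paper's per-item arguments are shorter and mutually independent, and its proof of (ii) in particular is a neat synthetic symmetry argument that needs no coordinates. The steps you flag as needing care (the parallelepiped identities, the equivalence with orthogonality, and $U,V,W>0$ from non-degeneracy) all check out.
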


\begin{proof}
\begin{enumerate}[label=(\roman*)]
    \item Obvious by the equality of opposite edges.
    \item Let \(O\) be the center of circumsphere \(\Omega\) of isosceles tetrahedron \(ABCD\). The locus of points \(Z\in \Omega\) such that \(\angle AZB=\angle ACB\) is a union of two circular arcs which are symmetric with respect to the plane \(ABO\). This means that the planes \(ADB\) and \(ACB\) are symmetric with respect to \(ABO\) for \(\angle ADB=\angle ACB\), which follows from~\ref{i}. So \(O\) lies on the bisector plane of dihedron \(AB\). Similarly \(O\) lies on the bisector planes of the other dihedrons and thus coincides with the incenter.
    \item Assume for the sake of contradiction that \(\angle ABC\ge 90^\circ\), for example. Choose the point \(D'\) in such a way that \(ABCD'\) is a parallelogram. Then if \(M\) is the midpoint of \(AC\), we have \(\triangle ADC=\triangle AD'C\) and
    \[BD<BM+MD=BM+MD'=BD'\le AC\]
    which is obviously false (the last inequality follows from the fact that \(BD'\) lies inside the circle with diameter \(AC\)). Thus the faces of \(ABCD\) are acute angled.
\end{enumerate}
\end{proof}

\section{Isogonal conjugation in tetrahedron}
\label{sect:isogonality}

Here we will work out a proof of the correctness of isogonal conjugation in tetrahedron. Note that by definition if \(P\) is a vertex of the tetrahedron then for any \(Q\) lying in the opposite faceplane \(P\) and \(Q\) are isogonal conjugates. Note as well that any two points on the opposite edgelines of the tetrahedron are isogonal conjugates too. Thus we may proceed assuming that \(P\) does not lie on the surface of the tetrahedron.

\begin{theorem}
\label{theorem:iso-conj}
    For arbitrary tetrahedron \(ABCD\) and point \(P\) not lying on its surface exists its isogonal conjugate \(Q\) with respect to \(ABCD\).
\end{theorem}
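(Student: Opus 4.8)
The plan is to produce $Q$ explicitly as the point equidistant from the four reflections of $P$ in the faceplanes of $ABCD$. Write $P_A,P_B,P_C,P_D$ for the reflections of $P$ in the planes $BCD$, $ACD$, $ABD$, $ABC$ (so $P_V$ is the reflection of $P$ in the face opposite $V$). Two elementary remarks set up the argument. First, each edge of the tetrahedron lies in exactly two faceplanes, namely the faces opposite its two non-incident vertices, so the six edges correspond bijectively to the six pairs drawn from $\{P_A,P_B,P_C,P_D\}$: the edge $VW$ corresponds to the pair $\{P_X,P_Y\}$, where $X,Y$ are the remaining two vertices. Second, since the two faceplanes through $VW$ both contain the line $VW$, reflection in either one preserves the distance from each point of $VW$ to $P$; hence $V$ and $W$ are equidistant from $P_X$ and from $P_Y$, so the line $VW$ lies in the perpendicular bisector plane of $P_XP_Y$.

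The key step is a lemma: for every edge $e$ the perpendicular bisector plane of the associated pair of reflections is exactly the isogonal plane of $(P,e)$ in the dihedron along $e$. I would prove this by cutting with the plane $\pi$ perpendicular to $e$ through the foot $O$ of the perpendicular from $P$ to $e$. In $\pi$ the two faces meet along two rays from $O$; take them at angular positions $0$ and $2\theta$, so the dihedral angle is $2\theta$ and its bisector plane meets $\pi$ along the ray at $\theta$. The plane $(P,e)$ meets $\pi$ along $OP$, say at angular position $\phi$; hence its isogonal plane meets $\pi$ along the ray at $2\theta-\phi$. The reflections of $P$ in the two faceplanes lie in $\pi$ at the same distance $|OP|$ from $O$, at angular positions $-\phi$ and $4\theta-\phi$, so the perpendicular bisector of that chord runs through $O$ at angular position $2\theta-\phi$. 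Since this perpendicular bisector plane also contains $e$, it coincides with the isogonal plane of $(P,e)$, proving the lemma.

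Granting the lemma, $Q$ is an isogonal conjugate of $P$ with respect to $ABCD$ if and only if $Q$ lies in the perpendicular bisector plane of each of the six pairs $\{P_i,P_j\}$, i.e. if and only if $Q$ is equidistant from all four points $P_A,P_B,P_C,P_D$. Because $P$ is off the surface — and we may also assume it lies on no edgeline, those points being covered by the remarks preceding the theorem — the four reflections are distinct and not collinear, so they lie on a unique sphere; in the exceptional case that they are coplanar they cannot be concyclic (that would force $|AP_A|=|AP|$, hence $A\in BCD$), and they determine a unique plane, with $Q$ the point at infinity along its normal — just as a point of the circumcircle of a triangle has an ``infinite'' conjugate. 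In every case there is exactly one such $Q$, which is the isogonal conjugate asserted by the theorem.

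The step I expect to be the real obstacle is the lemma. The computation itself is routine plane geometry, but the reduction of the three-dimensional isogonality condition to the cross-section $\pi$ must be set up with care, and one should check that the angular identities (the reflected positions $-\phi$, $4\theta-\phi$ and the bisector position $2\theta-\phi$) are valid whether or not $P$ lies inside the dihedral wedge. Besides that, the combinatorial matching of reflections to edges and the verification that the four reflections really are non-collinear off the surface are the remaining points that need attention. An alternative route, which I would hold in reserve, works one vertex at a time: the same reflections show that the three isogonal planes at a vertex meet in a line through that vertex, and that the four resulting lines are pairwise coplanar, hence concurrent — possibly at infinity.
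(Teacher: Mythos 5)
Your construction is the paper's construction: reflect \(P\) in the four faceplanes and take for \(Q\) the circumcenter of \(P_A,P_B,P_C,P_D\). Your lemma --- that the perpendicular bisector plane of the pair of reflections attached to an edge \(e\) is the isogonal plane of \(Pe\) --- is a correct cross-sectional repackaging of the paper's directed-angle chase around \(AB\) (the paper uses that \(ABD\) bisects \(\angle(ABP,ABP_C)\), that \(ABC\) bisects \(\angle(ABP,ABP_D)\), and that \(ABQ\), being the perpendicular bisector plane of \(P_CP_D\) through \(AB\), bisects \(\angle(ABP_C,ABP_D)\)); so the two proofs are essentially the same.

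The one point where you genuinely diverge is the degenerate case, and there you are more careful than the paper. The paper writes ``the circumcenter \(Q\) of \(P_AP_BP_CP_D\)'' without checking that it exists; you observe that it fails to exist exactly when the four reflections are coplanar, that coplanar reflections cannot be concyclic, and that \(Q\) is then forced to infinity. This case is not vacuous: coplanarity of the four reflections is a cubic equation in the coordinates of \(P\) which is not identically zero (it fails at the incenter, since the only linear relation among the four face normals has coefficients the face areas, whose sum is nonzero) and whose zero set is not contained in the four faceplanes; already for the regular tetrahedron one can exhibit points off every faceplane whose four reflections are coplanar. For such \(P\) the six isogonal planes share a direction but no finite point, which is the spatial analogue of the circumcircle phenomenon for triangles that the paper itself records in the introduction. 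So Theorem~\ref{theorem:iso-conj} as literally stated needs either the projective convention you adopt or an explicit exclusion of this exceptional surface; your write-up makes this visible while the paper's does not. Two tiny patches to your text: the non-concyclicity argument via \(|AP_A|=|AP|\) should be run at a vertex whose opposite faceplane does not contain \(P\) (if \(P\in BCD\) then \(|AP_A|=|AP|\) holds trivially and gives no contradiction, but another vertex does), and distinctness and non-collinearity of the reflections do require excluding the extended edgelines, not merely the surface, which you correctly do by appealing to the remarks preceding the theorem.
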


\begin{proof}
    Let \(P_A, P_B, P_C, P_D\) be the reflections of \(P\) in the respective faces of \(ABCD\). Since \(DP_A=DP_B=DP_C\), the line through \(D\) perpendicular to \(P_AP_BP_C\) passes through the circumcenter \(Q\) of \(P_AP_BP_CP_D\). The lines through \(A, B, C\) defined similarly pass through \(Q\) too.

    Now let us show that, for example, the planes \(ABP\) and \(ABQ\) are isogonal in dihedron \(AB\), i.e. they make equal angles with its bisector plane; see Figure~\ref{fig:isogon-conj} for a perspective in the direction of edge \(AB\). Choose a positive direction of rotation around \(AB\) and define \(\angle(ABX, ABY)\) for \(X\) and \(Y\) not lying on \(AB\) as the minimal angle of rotation in that positive direction that sends the plane \(ABX\) to \(ABY\). Then
    \begin{align*}
        \angle(ABP, ABD)&=\frac{\angle(ABP, ABP_C)}{2}\\
        &=\frac{\angle(ABP_D, ABP_C)-\angle(ABP_D, ABP)}{2}\\
        &=\angle(ABP_D, ABQ)-\angle(ABP_D, ABC)\\
        &=\angle(ABC, ABQ).
    \end{align*}
    
    Similarly planes \(eP\) and \(eQ\) are isogonal in dihedron \(e\) for any other edge \(e\) of \(ABCD\). Hence \(P\) and \(Q\) are isogonal conjugates.
    
    \begin{figure}[H]
    \includestandalone[width=\textwidth]{figure-sources/isogon-conj}
        \caption{}
        \label{fig:isogon-conj}
    \end{figure}
\end{proof}

Recall that for a given tetrahedron \(ABCD\) and point \(P\) the sphere passing through the projections of \(P\) onto the faces of \(ABCD\) is called the \emph{pedal sphere} of \(P\). For degenerated cases, i.e. when the set of projections contains less than four points, the pedal sphere can be defined via limit.

\begin{rem}
\label{rem:iso-conj}
    Note that in the proof of Theorem~\ref{theorem:iso-conj} the homothety with coefficient \(1/2\) centered at \(P\) sends the vertices of \(P_AP_BP_CP_D\) and its circumcenter \(Q\) to the projections of \(P\) on the faces of \(ABCD\) and the midpoint \(M_{PQ}\) of \(PQ\), respectively. So the resulting sphere centered at \(M_{PQ}\) passes through the projections of \(P\). Similarly it passes through the projections of \(Q\) too. Hence we may formulate the following proposition which is the generalization of the respective result for the triangle:
    \begin{cor}
    \label{cor:iso-conj-pedal-sphere}
        % Let \(P\) and \(Q\) be isogonal conjugate points in the isosceles tetrahedron \(ABCD\). Let \(P_A, P_B, P_C, P_D\) be the projections of \(P\) on the respective faces of \(ABCD\). Similarly define the projections of \(Q\). 
        The eight projections of two isogonal conjugate points in a tetrahedron onto its faces lie on a (pedal) sphere centered at the midpoint of the segment joining the two isogonal conjugate points. Moreover, the projections on the same face are diametrically opposite in the intersection circle of the sphere and the face.
    \end{cor}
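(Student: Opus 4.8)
The plan is to read off almost everything from Remark~\ref{rem:iso-conj} and then add one short synthetic observation for the ``diametrically opposite'' clause. Write $h$ for the homothety centered at $P$ with ratio $1/2$. As recorded in the remark, $h$ sends each reflection $P_X$ to the projection of $P$ onto the face opposite $X$, and sends the circumcenter $Q$ of $P_AP_BP_CP_D$ to the midpoint $M_{PQ}$ of $PQ$; hence $h$ carries the circumsphere of $P_AP_BP_CP_D$ onto a sphere $\sigma$ centered at $M_{PQ}$ and passing through the four projections of $P$. Since isogonal conjugation is symmetric, running the same construction from $Q$ (whose isogonal conjugate is $P$) produces a sphere centered at $M_{QP}=M_{PQ}$ through the four projections of $Q$. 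It then remains to see that this is the very same sphere $\sigma$ and to locate the eight feet on each face circle.

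Next I would fix a face $\varphi$ and let $P'$, $Q'$, $N$ be the orthogonal projections of $P$, $Q$, $M_{PQ}$ onto the plane of $\varphi$. Orthogonal projection onto a plane is affine, so $N$ is the midpoint of $P'Q'$; applying the Pythagorean theorem in the right triangles $M_{PQ}NP'$ and $M_{PQ}NQ'$ (degenerate if $M_{PQ}$ already lies in the plane) gives $M_{PQ}P'=M_{PQ}Q'$. Carrying this out on all four faces and combining it with the fact that the four projections of $P$ are equidistant from $M_{PQ}$ (Remark~\ref{rem:iso-conj}), say at distance $r$, forces each projection of $Q$ also to lie at distance $r$ from $M_{PQ}$. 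Hence all eight feet lie on the single sphere $\sigma$ of radius $r$ about $M_{PQ}$, which is the first assertion; note this argument makes the coincidence of the two a priori spheres automatic.

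Finally, the intersection of $\sigma$ with the plane of $\varphi$ is a circle $c$ whose center is precisely the foot $N$ of the perpendicular from the center $M_{PQ}$ of $\sigma$ to that plane. Since $P'$ and $Q'$ both lie on $c$ while $N$ is the midpoint of the chord $P'Q'$, that chord passes through the center of $c$ and is therefore a diameter, giving the ``moreover'' part. I do not expect a genuine obstacle: the construction in the proof of Theorem~\ref{theorem:iso-conj} already does the heavy lifting, and the only point that deserves a line of care is the appeal to the symmetry of isogonal conjugation — that the circumcenter produced when the construction is run from $Q$ is again $P$ — which follows either from Remark~\ref{rem:iso-conj} directly or from rerunning the argument of Theorem~\ref{theorem:iso-conj} with the roles of $P$ and $Q$ interchanged; the degenerate configurations are then handled by the limiting definition of the pedal sphere.
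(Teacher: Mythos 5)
Your proposal is correct, and its first half is exactly the paper's argument: the paper proves the corollary inside Remark~\ref{rem:iso-conj} by applying the homothety of ratio $1/2$ centered at $P$ to the circumsphere of $P_AP_BP_CP_D$ from Theorem~\ref{theorem:iso-conj}, obtaining a sphere centered at $M_{PQ}$ through the projections of $P$, and then disposes of the projections of $Q$ with a single ``similarly'' (i.e.\ by rerunning the construction from $Q$, which tacitly uses that the two resulting spheres have equal radii, e.g.\ because $QP_A=PQ_A$ under reflection in a face). Your Pythagorean step is a genuinely different and tidier way to finish: projecting $P$, $Q$, $M_{PQ}$ orthogonally onto a face plane, using that the projection $N$ of $M_{PQ}$ is the midpoint of the projected chord, you get $M_{PQ}P'=M_{PQ}Q'$ face by face, which makes the coincidence of the two spheres automatic and simultaneously yields the ``diametrically opposite'' clause (the chord's midpoint is the center of the intersection circle), a point the paper leaves entirely implicit. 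What your route buys is that you never need to invoke the symmetry of the construction run from $Q$; what the paper's route buys is brevity, since it treats the whole corollary as an immediate byproduct of the proof of Theorem~\ref{theorem:iso-conj}.
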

\end{rem}

It is not difficult to see that the reasoning in the proof of Theorem~\ref{theorem:iso-conj} and Remark~\ref{rem:iso-conj} are reversible. This lets us formulate the following corollary:
\begin{cor}
\label{cor:equiv-iso-conj-constr}
    Points \(P\) and \(Q\) are isogonal conjugates in a tetrahedron iff their pedal spheres coincide.
\end{cor}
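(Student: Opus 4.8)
The plan is to treat the two implications separately, in each case reading off what is already contained in the proof of Theorem~\ref{theorem:iso-conj} and Remark~\ref{rem:iso-conj}. For ``$P$ and $Q$ isogonal conjugates $\Rightarrow$ pedal spheres coincide'' I would simply invoke Remark~\ref{rem:iso-conj}: the sphere obtained there as the image of the circumsphere of $P_AP_BP_CP_D$ under the homothety of coefficient $1/2$ centered at $P$ is centered at $M_{PQ}$, passes through the four projections of $P$ onto the faces, and equally passes through the four projections of $Q$; since four non-coplanar points lie on a unique sphere, this single sphere is at the same time the pedal sphere of $P$ and of $Q$. (When fewer than four projections are distinct the same identity holds by the limiting convention defining the pedal sphere; one can also avoid limits here by noting that reflection in a face $\pi$ sends $P,Q$ to their reflections $P',Q'$ in $\pi$, so $|QP'|=|PQ'|$, which gives the two circumspheres of reflections---hence the two pedal spheres---equal radii in addition to the common center $M_{PQ}$.)

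For the converse I would reverse the construction of Theorem~\ref{theorem:iso-conj}. Let $\sigma$ be the common pedal sphere of $P$ and $Q$, let $c$ be its center, and suppose $P\neq Q$. The homothety of coefficient $2$ centered at $P$ carries $\sigma$ onto a sphere whose center is the point $P^{*}$ symmetric to $P$ about $c$, and this sphere passes through the reflections $P_A,P_B,P_C,P_D$ of $P$ in the four faces, because that homothety sends each projection of $P$ to the corresponding reflection. Hence $P^{*}$ is equidistant from $P_A,P_B,P_C,P_D$, i.e.\ it is their circumcenter, and by the proof of Theorem~\ref{theorem:iso-conj} it is therefore the isogonal conjugate of $P$. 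By the same argument applied to $Q$, the point $Q^{*}$ symmetric to $Q$ about $c$ is the isogonal conjugate of $Q$.

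It remains to conclude that $P^{*}=Q$, equivalently $c=M_{PQ}$, and this is the step I expect to be the main obstacle: taken literally the hypothesis ``$\operatorname{pedal}(P)=\operatorname{pedal}(Q)$'' only asserts that the eight projections of $P$ and $Q$ lie on the single sphere $\sigma$, which gives the two statements above but not, by itself, their combination. I would close the gap by a uniqueness statement---a sphere is the pedal sphere of at most two points, and those two points are isogonal conjugates. Indeed, if $X$ has pedal sphere $\sigma$ then, for each face $\pi_i$ ($i=1,2,3,4$), the foot of the perpendicular from $X$ to $\pi_i$ lies on the circle $\sigma\cap\pi_i$, so $X$ lies on the cylinder $\mathcal C_i$ over that circle with generators perpendicular to $\pi_i$; thus every such $X$ lies in $\mathcal C_1\cap\mathcal C_2\cap\mathcal C_3\cap\mathcal C_4$. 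Invoking Corollary~\ref{cor:iso-conj-pedal-sphere}---the feet of $X$ and of its isogonal conjugate are diametrically opposite on each circle $\sigma\cap\pi_i$---one shows that the only points of this intersection are $P$ and $P^{*}$, by ruling out the ``mixed'' selections of feet coming from the two antipodal pairs available on the four circles; since $Q$ is one of these points and $Q\neq P$, necessarily $Q=P^{*}$. Making that exclusion fully rigorous---rather than merely generic---is the delicate point; the forward implication and the reversal of the construction are routine.
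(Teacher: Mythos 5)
Your forward direction is exactly the paper's (Remark~\ref{rem:iso-conj}), and your reversal of the construction is also sound as far as it goes: it shows that the isogonal conjugate of $P$ is the reflection $P^{*}$ of $P$ in the centre $c$ of its pedal sphere, and likewise for $Q$. You have put your finger on precisely the right difficulty: to pass from this to $Q=P^{*}$ you need the uniqueness statement that a given sphere is the pedal sphere of at most one antipodal pair, i.e.\ that the intersection $\mathcal C_1\cap\mathcal C_2\cap\mathcal C_3\cap\mathcal C_4$ of your four cylinders contains at most two points. You flag the exclusion of the ``mixed'' choices of feet as the delicate step and leave it unproved; that is a genuine gap, and unfortunately it cannot be closed, because the uniqueness statement is false. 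Take the tetrahedron with face planes $x=0$, $y=0$, $z=0$, $x+y+z=1$, and the sphere $\sigma$ with centre $c=(p,s,s)$ and radius $\rho$ satisfying $\rho^{2}=2p^{2}+2s^{2}-(p+2s-1)^{2}$ (for instance $p=0.3$, $s=0.25$, $\rho^{2}=0.265$). Setting $u=\tfrac{1}{2}(\rho^{2}+p^{2}-2s^{2})$ and $v=\tfrac{1}{2}(\rho^{2}-p^{2})$, a direct check shows that all four points $c+(\pm\sqrt{u},\pm\sqrt{v},\mp\sqrt{v})$ (the last two coordinates carrying opposite signs, the first sign free) project onto each of the four face planes into points of $\sigma$, and for the numbers above none of them lies on a face and each has non-coplanar projections, so $\sigma$ is the pedal sphere of all four. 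They form two antipodal pairs about $c$; each pair consists of isogonal conjugates, but a point of one pair is not conjugate to a point of the other. Thus $X_1=c+(\sqrt{u},\sqrt{v},-\sqrt{v})$ and $X_2=c+(\sqrt{u},-\sqrt{v},\sqrt{v})$ have coinciding pedal spheres without being isogonal conjugates, so the ``if'' direction, read literally for an arbitrary tetrahedron and arbitrary pair of points, fails.

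What does survive---and is all that is ever used later (in the proof of Theorem~\ref{theorem:bimed-hyperb-parab} the common centre is known in advance to be the midpoint $M'$ of $P'Q'$)---is the version with the additional hypothesis that the common pedal sphere is centred at the midpoint of $PQ$, or equivalently that the projections of $P$ and $Q$ on each face are antipodal on the corresponding circle, as in the ``moreover'' clause of Corollary~\ref{cor:iso-conj-pedal-sphere}. Under that hypothesis your homothety argument closes instantly: $Q=2c-P=P^{*}$. I would recommend proving exactly that strengthened statement; the paper's own one-line justification (``the reasoning is reversible'') glosses over the same point you identified, and your instinct that this is where the real content lies was correct.
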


\section{Auxiliary facts}
\label{sect:aux-facts}

When working with spheres it is useful to study the analogous configurations (if existing) for circles on the plane. Many properties of circles on the plane are valid for spheres too. Particularly, this concerns to inversion.

Define the \emph{angle between two intersecting spheres} as the angle between their tangent planes in a point of their intersection. This definition can be extended for a sphere and a plane too. Indeed, one may think of the plane as a special case of a sphere whose center is at infinity.

It is well-known that angles between circles are preserved under inversion. Naturally, this is the case with spheres too.

\begin{prop}
\label{prop:angles-preserved}
    Angles between spheres are preserved under inversion.
\end{prop}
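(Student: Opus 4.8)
The plan is to reduce everything to the already-known planar statement by slicing with a single, well-chosen plane. Write $\iota$ for the inversion, $O$ for its center, and let $S_1,S_2$ be two spheres meeting at a nonzero angle $\varphi$; the tangent case $\varphi=0$ is immediate since inversion obviously preserves tangency, and if $O$ happens to lie on one of the spheres — so that its image degenerates to a plane — I will treat planes as spheres ``through infinity'' and, if necessary, recover this case by a continuity argument (perturb the spheres slightly off $O$, using that the angle between two spheres depends continuously on them). Let $O_1,O_2$ be the centers of $S_1,S_2$, let $\kappa=S_1\cap S_2$ be their intersection circle, and let $S_i'=\iota(S_i)$ with centers $O_i'$ and intersection circle $\kappa'=S_1'\cap S_2'$.

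First I would record the elementary fact that the line of centers $O_1O_2$ is precisely the axis of $\kappa$, so that a plane $\Pi$ containing this line meets $\kappa$ in two antipodal points and is the normal plane of $\kappa$ at each of them. Consequently, at such a point $X$ the plane $\Pi$ is perpendicular to the common tangent line $\ell=T_1\cap T_2$ of the two tangent planes $T_1,T_2$ at $X$; since $S_i\cap\Pi$ is a circle (a great circle of $S_i$, as $\Pi$ passes through $O_i$) whose tangent at $X$ is $T_i\cap\Pi$, and both lines $T_1\cap\Pi$ and $T_2\cap\Pi$ are perpendicular to $\ell$, the angle at which $S_1\cap\Pi$ and $S_2\cap\Pi$ cross at $X$ is exactly the dihedral angle between $T_1$ and $T_2$, i.e.\ $\varphi$. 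In short, every plane through the line of centers faithfully computes the angle between the two spheres.

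Now comes the key choice: let $\Pi$ be the plane through $O$, $O_1$ and $O_2$ (any plane through their common line if they are collinear). Here I would use the standard fact that $\iota$ carries a sphere to a sphere or plane whose center lies on the line joining $O$ to the original center; hence $O_i'\in OO_i\subset\Pi$, so $\Pi$ contains the line of centers $O_1'O_2'$ as well, and therefore — by the previous step — this \emph{same} plane $\Pi$ faithfully computes the angle between $S_1'$ and $S_2'$ too. Finally, since $O\in\Pi$, the inversion $\iota$ maps $\Pi$ onto itself and restricts there to a planar inversion with center $O$; it takes $S_i\cap\Pi$ to $\iota(S_i)\cap\iota(\Pi)=S_i'\cap\Pi$ and the point $X\in\kappa\cap\Pi$ to $\iota(X)\in\kappa'\cap\Pi$. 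Invoking the known planar invariance of angles between circles for these two pairs of circles lying in $\Pi$, we get $\varphi=\angle(S_1\cap\Pi,\,S_2\cap\Pi)=\angle(S_1'\cap\Pi,\,S_2'\cap\Pi)=\angle(S_1',S_2')$, as desired.

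The only genuine work I anticipate is in the second step: the little lemma that a plane through the line of centers of two intersecting spheres cuts out two circles meeting at precisely the angle between the spheres (equivalently, identifying such a plane with a normal plane of the intersection circle and verifying the perpendicularity to $\ell$), together with a careful but routine treatment of the degenerate configurations (a sphere through $O$, or $O,O_1,O_2$ collinear). After that, the argument is just bookkeeping around the two facts ``inversion sends spheres to spheres and their centers to collinear points'' and ``planar inversion preserves angles between circles''.
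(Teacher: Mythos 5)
Your proposal is correct and follows essentially the same route as the paper: slice the configuration with the plane through the inversion center and the centers of both spheres, observe that such a plane computes the angle between the spheres (and, since image centers stay on lines through the inversion center, also between their images), and reduce to the planar invariance of angles between circles. You simply spell out the supporting lemmas (the normal-plane computation of the angle and the collinearity of image centers) that the paper leaves implicit.
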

\begin{proof}
    Let us be given two intersecting spheres \(\gamma_1, \gamma_2\) and an inversion sphere \(\Omega\). Consider their section with the plane \(\pi\) through their centers. Then the angle between \(\gamma_1\) and \(\gamma_2\) is equal to the angle between the circles \(\gamma_1\cap \pi\) and \(\gamma_2\cap \pi\). Also note that for these two circles inversion in \(\Omega\) is equivalent to inversion in \(\Omega\cap \pi\). Thus, since the angles between circles are preserved under inversion on plane, the angles between the spheres constructed on these circles having the same center and radius are also preserved.
\end{proof}

The second fact that will come handy for the proof of the main result is as well a generalization of a plane construction:

\begin{prop}
\label{prop:bisector-sphere}
    Let us be given a sphere \(\Omega\) and a circle \(\sigma\) on it, as well as a sphere \(\Gamma\) passing through \(\sigma\). Suppose that \(\Gamma\) makes equal angles with \(\Omega\) and the plane of \(\sigma\). Then the center of \(\Gamma\) lies on \(\Omega\).
\end{prop}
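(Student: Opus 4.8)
The plan is to cut the whole configuration by a suitable plane and reduce to a short planar computation, in the spirit of the proof of Proposition~\ref{prop:angles-preserved}. Write $p$ for the plane of $\sigma$, let $O$ be the centre of $\Omega$ and $O_\Gamma$ the centre of $\Gamma$, and let $\ell$ be the line through the centre of $\sigma$ perpendicular to $p$. Since $\sigma$ lies on both $\Omega$ and $\Gamma$, each of $O$ and $O_\Gamma$ lies on $\ell$. (The sphere $\Gamma$ is genuine: the only plane through $\sigma$ is $p$ itself, so if $\Gamma$ were a plane its angle with $p$ would be $0$, forcing $\Omega$ tangent to $p$ and $\sigma$ degenerate.)

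Next I would fix a point $X\in\sigma$ and let $\pi$ be the plane spanned by $\ell$ and $X$. One checks that $\pi$ contains $O$, $O_\Gamma$ and $X$, that it is perpendicular to the tangent line of $\sigma$ at $X$, and that it meets $p$ in a line $m$ through $X$ perpendicular to $\ell$. Hence $C_\Omega:=\Omega\cap\pi$ and $C_\Gamma:=\Gamma\cap\pi$ are great circles of $\Omega$ and $\Gamma$, the angle between the spheres $\Gamma$ and $\Omega$ equals the angle between the circles $C_\Gamma$ and $C_\Omega$ at $X$, and the angle between $\Gamma$ and $p$ equals the angle between $C_\Gamma$ and the line $m$ at $X$. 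In other words, everything has been transported into the plane $\pi$.

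Now I would introduce coordinates in $\pi$: origin at $M:=\ell\cap m$, with $\ell$ the vertical axis and $m$ the horizontal axis, so that $X=(h,0)$ where $h>0$ is the radius of $\sigma$, $O=(0,d)$ and $O_\Gamma=(0,s)$ for some reals $d,s$; then $R^2=|OX|^2=d^2+h^2$. The tangent to $C_\Omega$ at $X$ is perpendicular to $\overrightarrow{XO}$ and the tangent to $C_\Gamma$ at $X$ is perpendicular to $\overrightarrow{XO_\Gamma}$, so
\[
\cos\angle(C_\Gamma,C_\Omega)=\frac{|ds+h^2|}{\sqrt{d^2+h^2}\,\sqrt{s^2+h^2}},\qquad
\cos\angle(C_\Gamma,m)=\frac{|s|}{\sqrt{s^2+h^2}}.
\]
Equating these two cosines (that is what ``$\Gamma$ makes equal angles with $\Omega$ and $p$'' means), clearing denominators, squaring, and cancelling $h^2\neq0$ reduces everything to $s^2-2ds-h^2=0$, i.e.\ $(s-d)^2=d^2+h^2=R^2$. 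Since $|OO_\Gamma|=|s-d|$, this says precisely that $O_\Gamma$ lies on $\Omega$, which is the claim.

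The routine algebra is immediate; the two points that need care are the reduction --- verifying that $\pi$ (and no other section plane) is the one in which both of the relevant angles are faithfully measured, which is exactly why $\pi$ must be taken through both $\ell$ and the intersection point $X$ --- and the bookkeeping of absolute values, since ``equal angles'' refers to unoriented angles in $[0,\pi/2]$. I would also note that the displayed computation is reversible, so in fact $\Gamma$ makes equal angles with $\Omega$ and $p$ if and only if the centre of $\Gamma$ lies on $\Omega$.
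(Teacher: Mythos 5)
Your proof is correct and follows essentially the same route as the paper's: both reduce the statement to a planar one by sectioning with a plane through the two centres (your plane \(\pi\) through \(\ell\) and \(X\in\sigma\) is exactly the paper's secant plane, since both centres lie on \(\ell\)), and both then equate the angle of \(C_\Gamma\) with \(C_\Omega\) to its angle with the chord line. The only difference is in how that planar step is settled --- the paper argues synthetically that the tangent to \(\gamma\) at \(S_2\) must be the tangent--chord bisector \(US_2\), forcing the centre to be the antipodal point \(V\in\omega\), whereas you verify the equivalent identity \((s-d)^2=d^2+h^2\) by a short coordinate computation, which also hands you the converse implication for free.
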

\begin{proof}
    Let \(Q\) be the center of \(\Gamma\).
    Consider a section of the construction with a plane through the centers of \(\Gamma\) and \(\Omega\). Let the sections of these spheres be the circles \(\gamma\) and \(\omega\) respectively, and let \(S_1, S_2\) be the points of intersection of \(\sigma\) with the secant plane; see Figure~\ref{fig:bisector-sphere}. Let also \(UV\) be the diameter in \(\omega\) perpendicular to \(S_1S_2\) and \(t\) be the tangent at \(S_2\) to \(\omega\). Without loss of generality we may assume that \(Q\) and \(V\) lie in the same side of \(S_1S_2\).

    \begin{figure}[H]
    \includestandalone[width=\textwidth]{figure-sources/bisector-sphere}
        \caption{}
        \label{fig:bisector-sphere}
    \end{figure}
    
    By simple angle chasing one may check that \(US_2\) bisects the angle between \(t\) and \(S_1S_2\). This means that \(\gamma\) should touch \(US_2\). Similarly \(\gamma\) should touch \(US_1\) too so the center of \(\gamma\) coincides with \(V\) and thus lies on \(\Omega\).
\end{proof}

We will utilize the following property of isogonal conjugate points in a triangle as well:

\begin{prop}
\label{prop:iso-conj-bisect-circ}
    Let \(X\) and \(Y\) be isogonal conjugate points in the triangle \(ABC\).
    Let \(M\) and \(N\) be the midpoints of arcs \(AB\) not containing and containing \(C\), respectively. Let \(\gamma_M\) and \(\gamma_N\) be the circles centered at \(M\) and \(N\), respectively, passing through \(A, B\). Then
    \begin{enumerate}[label=(\roman*)]
        \item each of the circles \(\gamma_M\) and \(\gamma_N\) makes equal angles with (bisects) the circles \(ABX\) and \(ABY\);
        \item \(N\) and \(M\) are respectively the external and internal homothety centers of the circles \(ABX\) and \(ABY\).
    \end{enumerate}
\end{prop}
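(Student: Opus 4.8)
The plan is to reduce everything to the classical fact about isogonal conjugates in a triangle: if $X$ and $Y$ are isogonal conjugates in $ABC$, then $\angle XAB=\angle YAC$ and $\angle XBA=\angle YBC$, so the lines $AX,AY$ are reflections in the bisector from $A$, and likewise at $B$. First I would set up the circle $\gamma_M$ through $A$ and $B$ centered at the arc midpoint $M$ (the point where the internal bisector of $\angle C$ meets the circumcircle); the key classical lemma is that $MA=MB$ equals the tangent length considerations give $\angle MAB=\angle MBA=\tfrac12\angle C$, and more importantly that $MA$ is tangent to the circumcircle of $ACX$-type circles... but cleaner: the angle between circle $ABX$ and circle $\gamma_M$ at the point $A$ equals the angle between the chords/tangents there, which I can compute via the inscribed-angle theorem from $\angle XAB$ and $\angle MAB$.

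Concretely, for part (i): fix the point $A$, which lies on all three circles $ABX$, $ABY$, $\gamma_M$. The angle at $A$ between two circles through $A$ and $B$ equals the absolute difference of the angles that their tangents at $A$ make with the common chord $AB$; and the tangent at $A$ to the circle $ABX$ makes with $AB$ an angle equal to the inscribed angle $\angle XBA$ (tangent–chord angle), while the tangent at $A$ to $\gamma_M$ makes with $AB$ the angle $\angle MBA=\tfrac12\angle C$. Hence the angle between $ABX$ and $\gamma_M$ at $A$ is $|\angle XBA-\tfrac12\angle C|$, and similarly the angle between $ABX$ and $\gamma_M$ at $B$ is $|\angle XAB-\tfrac12\angle C|$. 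Doing the same computation for $ABY$ and using $\angle YBA = \angle XBC$, $\angle YAB=\angle XAC$ together with $\angle XAB+\angle XAC=\angle A$ etc.\ will not immediately match unless I pair them correctly — so the real content is the bookkeeping: show the angle between $\gamma_M$ and $ABX$ equals the angle between $\gamma_M$ and $ABY$. I expect to get the angle between $\gamma_M$ and $ABX$ at $A$ to equal the angle between $\gamma_M$ and $ABY$ at $B$ (they swap), and since for circles through $A$ and $B$ the angle of intersection is the same at both intersection points, this is exactly what is needed. For $\gamma_N$ the argument is identical with $\tfrac12\angle C$ replaced by $90^\circ-\tfrac12\angle C$ (the tangent-chord angle at $N$, the far arc midpoint), and the equality of the two intersection angles again follows.

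For part (ii): once (i) is known, $\gamma_M$ and $\gamma_N$ both bisect $ABX$ and $ABY$, meaning they are the two "mid-circles" of the coaxial-type pencil — but more elementarily, a circle that meets two given circles at equal angles and passes through both their intersection points $A,B$ must have its center on the radical-axis-perpendicular, i.e.\ on line $MN$, which is the perpendicular bisector of $AB$; and the two homothety centers of $ABX$ and $ABY$ lie on the line through their centers, which is also perpendicular to $AB$ through the midpoint of $AB$ — so all these points are collinear on the perpendicular bisector of $AB$. To pin down which of $M,N$ is the internal and which the external center, I would use orientation: $M$ and $N$ lie on opposite sides of line $AB$ (one inside arc, one outside), while the internal homothety center of two non-nested circles lies between them and the external one outside; comparing with the positions of the centers of $ABX$ and $ABY$ relative to $AB$ (they lie on the same side as $Y$ resp.\ $X$, or on opposite sides), one reads off that the external center is $N$ and the internal center is $M$. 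Alternatively, and perhaps more robustly, I would invoke the fact that a circle bisecting two circles passes through a homothety center iff it is orthogonal to... no — cleanest is: $\gamma_M$ bisecting $ABX$ and $ABY$ and passing through $A,B$ forces (by the equal-tangent-angle condition at $A$) that $A$ has equal power structure making $M$ a homothety center; the sign of the homothety ratio is then determined by whether $\gamma_M$ separates the two circles, which it does for exactly one of $M,N$.

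The main obstacle I anticipate is the sign/orientation bookkeeping in part (i): getting the tangent–chord angles with consistent orientation so that the two intersection angles (at $A$ and at $B$) of $\gamma_M$ with $ABX$ genuinely equal those of $\gamma_M$ with $ABY$, rather than merely being equal in absolute value up to a case distinction depending on whether $X$ is inside the triangle. Handling this cleanly probably requires either directed angles modulo $\pi$ throughout, or splitting into the cases "$X$ interior" versus "$X$ exterior"; I would opt for directed angles to keep the argument uniform, and only at the very end (for the internal-versus-external distinction in (ii)) switch to an explicit configuration check.
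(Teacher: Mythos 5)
Your plan for part (i) is in the right spirit (measure the intersection angles at $A$ via tangent directions), but as written it has both errors and a deferred core step. The tangent--chord angles are misidentified: the angle between the tangent to circle $ABX$ at $A$ and the chord $AB$ is the inscribed angle $\angle AXB$ in the alternate segment, not $\angle XBA$; and the tangent to $\gamma_M$ at $A$, being perpendicular to $AM$, makes the angle $90^\circ-\tfrac12\angle C$ with $AB$, not $\angle MBA=\tfrac12\angle C$. More importantly, you yourself flag that ``the real content is the bookkeeping'' and then do not carry it out. With the corrected angles, what must be shown is that the direction $AM$ (at signed angle $-\tfrac12\angle C$ from $AB$, on the non-$C$ side) bisects the directions $AS$ and $AT$ toward the circumcenters $S$ of $ABX$ and $T$ of $ABY$, which sit at signed angles $90^\circ-\angle AXB$ and $90^\circ-\angle AYB$; this reduces to the identity $\angle AXB+\angle AYB=180^\circ+\angle C$, which is exactly where isogonality enters and which your write-up never states or proves. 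The paper packages this step cleanly through the incenter $I$ (which lies on $\gamma_M$): since $AI$ bisects $\angle XAY$ and $BI$ bisects $\angle XBY$, a short angle chase shows $AM$ bisects $\angle SAT$, giving (i) for $\gamma_M$, and $AN\perp AM$ immediately gives it for $\gamma_N$.

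In part (ii) the gap is more serious. You establish only that $M$, $N$ and the two homothety centers all lie on the perpendicular bisector of $AB$, and then try to ``read off'' which is internal and which is external; but collinearity does not show that $M$ or $N$ is a homothety center at all. What is needed is that $M$ divides the segment $ST$ of centers in the ratio of the radii (and $N$ divides it externally in that ratio), and your argument never produces this ratio --- your aside about mid-circles gestures at the right fact but is abandoned mid-sentence. The paper obtains it in one line as a corollary of part (i): since $AM$ and $AN$ are the internal and external bisectors of $\angle SAT$ and $M,N$ lie on the line $ST$, the angle bisector theorem gives $\frac{SM}{MT}=\frac{SA}{AT}=\frac{SN}{NT}$, i.e., $M$ and $N$ divide $ST$ internally and externally in the ratio of the radii, which is precisely the homothety-center condition. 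To complete your proof you would need to supply both the identity $\angle AXB+\angle AYB=180^\circ+\angle C$ for part (i) and this ratio computation for part (ii).
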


\begin{proof}
    Let \(I\) be the incenter of \(ABC\), and \(S, T\) be the circumcenters of \(ABX, ABY\), respectively; see Figure~\ref{fig:iso-conj-bisect-circ}. Recall that \(I\) lies on \(\gamma_M\).

    Since \(AI\) bisects \(\angle XAY\) and \(BI\) bisects \(\angle XBY\), easy angle chasing yields that \(AM\) bisects \(\angle TAS\). Hence \(\gamma_M\) bisects the circles \(ABX\) and \(ABY\). But \(AN\perp AM\) so \(AN\) bisects \(\angle TAS\) too and \(\gamma_N\) also bisects the circles \(ABX\) and \(ABY\). Thus part (i) is proved.

    By the bisector property \(\frac{SM}{MT}=\frac{SA}{AT}=\frac{SN}{NT}\).
    This proves part (ii).
    \begin{figure}[H]
    \includestandalone[width=\textwidth]{figure-sources/iso-conj-bisect-circ}
        \caption{}
        \label{fig:iso-conj-bisect-circ}
    \end{figure}
\end{proof}

\begin{prop}
\label{prop:hom-centr-bisect}
    Let us be given two spheres \(\Omega_1\) and \(\Omega_2\) intersecting each other through the circle \(\sigma\). Let \(S\) be the center of their external homothety. Then the sphere \(\Gamma\) centered at \(S\) and passing through \(\sigma\) bisects the spheres \(\Omega_1\) and \(\Omega_2\).
\end{prop}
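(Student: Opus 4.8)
The plan is to push everything into a planar section, exactly as in the proofs of Propositions~\ref{prop:angles-preserved} and~\ref{prop:bisector-sphere}, and then to observe that in the plane the external homothety center is nothing but the foot of an external angle bisector.

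First I would note that \(S\), being the center of a homothety taking \(\Omega_1\) to \(\Omega_2\), lies on the line \(O_1O_2\) joining their centers. Let \(\pi\) be a plane through \(O_1O_2\); then \(S\in\pi\). The plane of \(\sigma\) is perpendicular to \(O_1O_2\) and contains the center of \(\sigma\), so \(\pi\) meets \(\sigma\) in a diameter of \(\sigma\), say in the antipodal points \(S_1,S_2\). Set \(\omega_i=\Omega_i\cap\pi\) and \(\gamma=\Gamma\cap\pi\); these are circles with the same centers and radii as the corresponding spheres, \(S\) is the external homothety center of \(\omega_1\) and \(\omega_2\), and \(\gamma\) is the circle with center \(S\) through \(S_1\) and \(S_2\). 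The normals to \(\Gamma\) and to \(\Omega_i\) at \(S_1\) are directed along \(S_1S\) and \(S_1O_i\), both of which lie in \(\pi\); hence the tangent planes of \(\Gamma\) and of \(\Omega_i\) at \(S_1\) both contain the line through \(S_1\) normal to \(\pi\), so their dihedral angle equals the angle, measured inside \(\pi\), between the tangents of \(\gamma\) and of \(\omega_i\) at \(S_1\). Thus it suffices to prove the planar statement that \(\gamma\) makes equal angles with \(\omega_1\) and with \(\omega_2\) (and the angle between two circles is the same at either intersection point, so working at \(S_1\) is enough).

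For the plane geometry, write \(r_i\) for the radius of \(\omega_i\). If \(r_1=r_2\) then \(S\) is at infinity, \(\Gamma\) is the radical plane of \(\Omega_1\) and \(\Omega_2\), and the claim is immediate from the mirror symmetry of the configuration in that plane; so assume \(r_1\neq r_2\). In the triangle \(O_1S_1O_2\) we have \(S_1O_1=r_1\) and \(S_1O_2=r_2\), while \(S\) is the point of the line \(O_1O_2\) lying outside the segment \(O_1O_2\) with \(SO_1:SO_2=r_1:r_2\); by the external angle bisector theorem \(S_1S\) is therefore the external bisector of \(\angle O_1S_1O_2\), so it makes equal angles with the lines \(S_1O_1\) and \(S_1O_2\). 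Taking perpendiculars at \(S_1\), the tangent of \(\gamma\) at \(S_1\) (perpendicular to \(S_1S\)) makes equal angles with the tangent of \(\omega_1\) at \(S_1\) (perpendicular to \(S_1O_1\)) and with the tangent of \(\omega_2\) at \(S_1\) (perpendicular to \(S_1O_2\)). This is exactly the planar statement, and combined with the first step it proves the proposition.

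The main thing to watch is bookkeeping rather than a genuine obstacle: one should check that \(O_1,S_1,O_2\) are not collinear (they are not, unless \(\sigma\) degenerates to a point, i.e.\ unless \(\Omega_1\) and \(\Omega_2\) are tangent), justify carefully the reduction ``angle between spheres \(=\) angle between planar sections'', and dispose of the degenerate case \(r_1=r_2\). One could instead deduce the planar statement from Proposition~\ref{prop:iso-conj-bisect-circ}(i) after realizing \(\omega_1\) and \(\omega_2\) as the circumcircles of \(ABX\) and \(ABY\) for an appropriate triangle \(ABC\) with isogonal conjugates \(X,Y\), but the external-bisector argument above is shorter and self-contained.
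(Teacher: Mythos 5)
Your proof is correct, and while the first step (cutting everything with a plane through the line of centers and reducing to circles) is exactly the paper's, your planar argument is genuinely different. The paper never introduces the triangle of centers: it takes a point \(A\in\sigma\) in the secant plane, lets \(SA\) meet \(\omega_1\) a second time at \(B\), observes that the homothety makes the tangent of \(\omega_1\) at \(B\) parallel to the tangent of \(\omega_2\) at \(A\), and uses the equality of the tangent--chord angles of \(\omega_1\) at \(A\) and \(B\) to conclude that \(SA\), hence its perpendicular the tangent of \(\gamma\), bisects the angle between the tangents of \(\omega_1\) and \(\omega_2\) at \(A\). You instead use the metric characterization of the external homothety center as the point dividing \(O_1O_2\) externally in the ratio \(r_1:r_2\), apply the converse of the external angle bisector theorem in triangle \(O_1S_1O_2\) to get that \(S_1S\) makes equal angles with \(S_1O_1\) and \(S_1O_2\), and then rotate all three normals by \(90^\circ\) into the three tangents. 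Both arguments are short and valid; the paper's relies only on the homothety's action on tangent lines (no ratios), whereas yours needs the ratio description of \(S\) and the uniqueness half of the external bisector theorem. A modest advantage of your write-up is that you explicitly justify the reduction ``angle between spheres equals angle between the planar sections'' and dispose of the degenerate cases (\(r_1=r_2\), tangent spheres), which the paper leaves implicit.
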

\begin{proof}
    Consider a section of the construction by any plane \(\pi\) passing through the centers of \(\Omega_1\) and \(\Omega_2\); see Figure~\ref{fig:hom-centr-bisect}. Let \(\omega_i=\Omega_i\cap \pi, i\in\{1, 2\}\) and \(\gamma=\Gamma\cap\pi\). Let \(A\) be one of the points of intersection of \(\omega_1\) and \(\omega_2\). Let \(SA\) intersect \(\omega_1\) second time at \(B\).

    In force of homothety the tangents of \(\omega_1\) at \(B\) and of \(\omega_2\) at \(A\) are parallel. On the other hand, \(SA\) makes equal angles with the tangents of \(\omega_1\) at \(A\) and \(B\). Thus \(SA\) bisects the angle between the tangents of \(\omega_1\) and \(\omega_2\) at \(A\). Equivalently, this angle is bisected by the tangent at \(A\) to \(\gamma\), as needed.
    \begin{figure}[H]
    \includestandalone[width=\textwidth]{figure-sources/hom-centr-bisect}
        \caption{}
        \label{fig:hom-centr-bisect}
    \end{figure}
\end{proof}

\section{Isogonal conjugates symmetric in bimedians}
\label{sect:iso-sym-bimed}

From here on we will denote by \(ABCD\) our isosceles tetrahedron, by \(\Omega\) its circumsphere and by \(O\) its center.

In this section we will heavily rely on coordinate bashing (though incorporating it with some crucial geometric reasoning) and will use several quantitative characteristics of the tetrahedron in terms of its coordinates. Namely, embed \(ABCD\) into the Cartesian coordinate system \(Oxyz\) so that
\begin{equation}
\label{eq:embed-tetr}
    A=(-a, b, c),\ B=(a, -b, c),\ C=(a, b, -c),\ D=(-a, -b, -c)
\end{equation}
for some \(a, b, c\in \mathbb R\setminus\{0\}\).

The following proposition defines pretty much all the values that we need:
\begin{prop}
\label{prop:quant-in-iso-tetr}
    In \(ABCD\)
    \begin{enumerate}[label=(\roman*)]
        \item if \(S\) is the area of a face then
        \[S=2\sqrt{a^2b^2+b^2c^2+c^2a^2};\]
        \item if \(d\) is the distance from \(O\) to a face then
        \[d=\frac{2|abc|}{S};\]
        \item if \(\theta\) is half the angle of dihedron \(CD\) (or equivalently \(AB\)) then
        \[\sin\theta=\frac{d}{|c|}.\]
    \end{enumerate}
\end{prop}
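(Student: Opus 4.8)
The plan is to read all three quantities straight off the coordinates \eqref{eq:embed-tetr}; parts (i) and (ii) are essentially one-line computations, while (iii) needs a little synthetic input on top. For (i) it suffices to treat a single face, say $BCD$, since the four faces are congruent (opposite edges of $ABCD$ are equal). From $\vec{BC}=(0,2b,-2c)$ and $\vec{BD}=(-2a,0,-2c)$ one gets $\vec{BC}\times\vec{BD}=4(-bc,\,ac,\,ab)$, of length $4\sqrt{a^2b^2+b^2c^2+c^2a^2}$, so $S$ is half of this.

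For (ii), the vector $(-bc,ac,ab)$ just obtained is a normal to the plane $BCD$, so (substituting the coordinates of $B$) that plane has equation $-bcx+acy+abz+abc=0$, and the distance from $O=(0,0,0)$ to it is $|abc|/\sqrt{a^2b^2+b^2c^2+c^2a^2}=|abc|/(S/2)=2|abc|/S$. One could instead invoke that $O$ is the centroid, decompose $ABCD$ into four tetrahedra of equal volume $\tfrac13 Sd$, and compute the total volume $\tfrac16\bigl|\det[\vec{AB},\vec{AC},\vec{AD}]\bigr|=\tfrac83|abc|$.

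For (iii), I would argue geometrically. Let $M=(0,0,c)$ be the midpoint of $AB$; then $\vec{OM}=(0,0,c)$ is perpendicular to $\vec{AB}=(2a,-2b,0)$, and $|OM|=|c|$. Let $\Pi$ be the plane through $M$ perpendicular to $AB$ and let $F$ be the foot of the perpendicular from $O$ to the face $ABC$, so $OF=d$. Since $OM$ and $OF$ are both perpendicular to $AB$, the points $O,M,F$ all lie in $\Pi$; the segment $MF$ lies along the line in which face $ABC$ meets $\Pi$, and triangle $OMF$ is right-angled at $F$. The traces of the faces $ABC$ and $ABD$ in $\Pi$ bound a planar angle at $M$ whose measure is the dihedral angle $2\theta$ (the dihedron $AB$, congruent to $CD$), and its internal bisector is the ray $MO$, because $O$, being the circumcenter of $ABCD$, is also its incenter (Section~\ref{sect:iso-tetr}) and so lies on every bisector plane. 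Hence $\angle OMF=\theta$, and the right triangle gives $\sin\theta=OF/OM=d/|c|$.

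The only point requiring care is the last one: one must make sure $MO$ is the \emph{internal} bisector of the dihedral angle — equivalently that $F$ lands in the interior of face $ABC$ — so that $\angle OMF$ equals $\theta$ and not $\pi-\theta$; this is exactly what the coincidence of circumcenter and incenter supplies. Should one prefer to dispense with the synthetic step, (iii) also follows by computing $\cos 2\theta$ from the face normals $(bc,ac,ab)$ of $ABC$ and $(bc,ac,-ab)$ of $ABD$ and checking that it equals $1-2(d/|c|)^2$; but the argument above is shorter and fits the paper's blend of computation and geometry.
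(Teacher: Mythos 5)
Your proof is correct; the differences from the paper are in the computational details rather than in substance. For (i) the paper expands Heron's formula on the side lengths $2\sqrt{a^2+b^2}$ etc., whereas you read the area off a cross product --- your route is arguably cleaner and less error-prone. For (ii) the paper compares two expressions for the volume ($[ABCD]=\tfrac{4dS}{3}$ from decomposing about $O$, versus $\tfrac{8|abc|}{3}$ as one third of the circumscribed rectangular parallelepiped), while you write down the plane equation of a face and apply the point--plane distance formula; both are one-line arguments, and you even note the paper's volume argument as an alternative. For (iii) your argument is essentially the paper's: the paper takes $M$ the midpoint of $CD$ and $Q$ the circumcenter of face $ACD$ (which is exactly the foot of the perpendicular from $O$, your point $F$, for the congruent dihedron) and reads $\sin\theta=OQ/OM$ from the right triangle $OMQ$; you do the same at the midpoint of $AB$. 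Your explicit attention to why $MO$ is the \emph{internal} bisector and why $F$ falls inside the face (acuteness of the faces, coincidence of circumcenter and incenter) is a point the paper leaves implicit, so that remark is a welcome addition rather than a deviation.
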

\begin{proof}
    \begin{enumerate}[label=(\roman*)]
        \item Taking into account that the sides of \(ABC\) are equal to \(AB=2\sqrt{a^2+b^2}\) etc. and utilizing Heron's formula, after some manipulations we find the presented formula for \(S\).
        \item Note that the volume of \(ABCD\) is equal to \([ABCD]=\displaystyle\frac{4d S}{3}\). On the other hand \([ABCD]\) is third the volume of the circumscribed parallelepiped which is rectangular in case of isosceles tetrahedron, i.e. \([ABCD]=\displaystyle\frac{8|abc|}{3}\). Hence we find the presented value for \(d\).
        \item Let \(M\) be the midpoint of \(CD\) and \(Q\) be the circumcenter of \(ACD\). Then \(\theta =\angle OMQ\) so
        \[\sin\theta = \frac{OQ}{OM}=\frac{d}{|c|}.\]
    \end{enumerate}
\end{proof}

Recall that a \emph{bimedian} of tetrahedron is a line joining midpoints of opposite edges. We will denote by \(\ell_A, \ell_B, \ell_C\) the bimedians of \(ABCD\) joining the midpoints of edges \(DA\) and \(BC\), \(DB\) and \(AC\), \(DC\) and \(AB\), respectively.

We will need the following fact too to prove the upcoming theorem:
\begin{lemma}
\label{lemma:equidist-proj}
    Let \(X\) be any point of the space. Let \(P\) and \(Q\) be its projections on \(ACD\) and \(BCD\), respectively. Also let \(R\) be its projection on \(\ell_C\). Then \(PR=RQ\).
\end{lemma}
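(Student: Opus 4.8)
The plan is to argue by coordinates, in the spirit of the present section. The first observation is that with the embedding~\eqref{eq:embed-tetr} the midpoints of $CD$ and $AB$ are $(0,0,-c)$ and $(0,0,c)$, so the bimedian $\ell_C$ is nothing but the $z$-axis; hence, writing $X=(p,q,r)$, its projection onto $\ell_C$ is simply $R=(0,0,r)$. Next I would write the equations of the two faces meeting along $CD$: computing the cross products of edge vectors one gets plane $ACD:\ bcx-acy+abz+abc=0$ and plane $BCD:\ bcx-acy-abz-abc=0$, with normals $\mathbf n_1=(bc,-ac,ab)$ and $\mathbf n_2=(bc,-ac,-ab)$, both of length $L:=\sqrt{a^2b^2+b^2c^2+c^2a^2}$ (which equals $S/2$ by Proposition~\ref{prop:quant-in-iso-tetr}, though this is not needed).

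Then $P=X-\tfrac{u}{L^2}\,\mathbf n_1$ and $Q=X-\tfrac{v}{L^2}\,\mathbf n_2$, where $u:=\mathbf n_1\cdot X+abc=bcp-acq+abr+abc$ and $v:=\mathbf n_2\cdot X-abc=bcp-acq-abr-abc$ are $L$ times the signed distances from $X$ to the two planes. Subtracting $R$, squaring, and using $b^2c^2+c^2a^2+a^2b^2=L^2$ to collapse the three terms quadratic in $u$ (resp.\ $v$), one finds that $PR^2$ and $QR^2$ take the same shape:
\[
PR^2=p^2+q^2+\frac{u^2}{L^2}-\frac{2u(bcp-acq)}{L^2},\qquad
QR^2=p^2+q^2+\frac{v^2}{L^2}-\frac{2v(bcp-acq)}{L^2}.
\]
Therefore
\[
PR^2-QR^2=\frac{(u-v)\bigl[(u+v)-2(bcp-acq)\bigr]}{L^2},
\]
and since the terms $\pm abr$ and $\pm abc$ in $u$ and $v$ cancel we get $u+v=2(bcp-acq)$, so the bracket vanishes and $PR=QR$.

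I do not expect a genuine obstacle: once the setup is in place the identity is forced, the only care needed being the bookkeeping of the expansion. The one point worth flagging is \emph{why} the cancellation must happen — namely that the normals of the two faces through $CD$ differ only in the sign of their $z$-component, with the constant terms adjusted to match, which is exactly the reflection in the plane $z=-c$ (a bisector plane of the dihedral angle along $CD$) interchanging $ACD$ and $BCD$ while carrying the line $\ell_C$ onto itself. A purely synthetic proof along these lines is tempting but awkward, because such a reflection moves $X$; the coordinate computation above is the clean route.
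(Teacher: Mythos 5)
Your computation is correct --- the plane equations, the projection formulas $P=X-\tfrac{u}{L^2}\mathbf n_1$, $Q=X-\tfrac{v}{L^2}\mathbf n_2$, and the final cancellation $u+v=2(bcp-acq)$ all check out --- but it is a genuinely different argument from the paper's. The paper proves the lemma synthetically: it takes $S$ to be the projection of $X$ onto the bisector plane of the dihedron $CD$ (the plane $bx=ay$ in your coordinates, which contains $\ell_C$) and $T$ the projection of $S$ onto the line $CD$, observes that $X, P, S, Q, T$ lie on a circle with diameter $XT$ in the plane through $X$ perpendicular to $CD$, deduces $PS=SQ$ because $TS$ bisects $\angle PTQ$ (equal inscribed angles cut equal chords), and then obtains $PR=RQ$ from $RS\perp PQS$. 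Both proofs ultimately rest on the same geometric fact --- that $\ell_C$ lies in a bisector plane of the dihedron $CD$, a consequence of the tetrahedron being isosceles --- which in your version surfaces as the normals of the two faces differing only in the sign of their $z$-component, forcing $u+v=2(bcp-acq)$. What your route buys is uniformity with the rest of the section (which already fixes the embedding~\eqref{eq:embed-tetr} and computes in it) and no auxiliary points; what the paper's route buys is brevity and a visible reason for the equality. Your closing remark is also apt: the natural reflection swapping the two faces moves $X$, which is precisely why the paper channels the symmetry through the circle with diameter $XT$ rather than through a reflection.
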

\begin{proof}
    Let \(S\) be the projection of \(X\) on the bisector plane of dihedron \(CD\) and let \(T\) be the projection of \(S\) on \(CD\); see Figure~\ref{fig:equidist-lemma}. Then \(X, P, S, Q, T\) lie on a circle with diameter \(XT\). Since \(TS\) bisects \(\angle PTQ\) we get that \(PS=SQ\). Hence in force of \(RS\perp PQS\) we deduce that \(PR=RQ\).
    \begin{figure}[H]
    \includestandalone[width=\textwidth]{figure-sources/equidist-lemma}
        \caption{}
        \label{fig:equidist-lemma}
    \end{figure}
\end{proof}

\begin{theorem}
\label{theorem:bimed-hyperb-parab}
    Let \(\ell\) be a bimedian of \(ABCD\). Then the pairs of isogonal conjugate points in \(ABCD\) which are symmetric with respect to \(\ell\) form a hyperbolic paraboloid.
\end{theorem}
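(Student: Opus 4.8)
The plan is to reduce the statement to a one‑line coordinate computation, using the pedal‑sphere description of isogonal conjugacy together with the built‑in symmetry of the tetrahedron. It suffices, by symmetry, to treat the case $\ell=\ell_C$; in the coordinates \eqref{eq:embed-tetr} the bimedians $\ell_A,\ell_B,\ell_C$ are precisely the $x$‑, $y$‑ and $z$‑axes, since e.g.\ the midpoints of the opposite edges $AB$ and $CD$ are $(0,0,c)$ and $(0,0,-c)$. Let $\rho$ be the reflection in $\ell_C$, i.e.\ the half‑turn $\rho\colon(x,y,z)\mapsto(-x,-y,z)$; then ``$P$ and $P'$ are symmetric in $\ell_C$'' means exactly $P'=\rho(P)$, and $\rho$ is an isometry of $ABCD$ (it interchanges $A\leftrightarrow B$ and $C\leftrightarrow D$), so $\rho$ carries the pedal sphere of any point $Y$ to the pedal sphere of $\rho(Y)$. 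Hence, by Corollary~\ref{cor:equiv-iso-conj-constr}, the pair $\{P,\rho(P)\}$ is an isogonal conjugate pair iff the pedal sphere of $P$ coincides with that of $\rho(P)$, i.e.\ is $\rho$‑invariant, i.e.\ has its centre on $\ell_C$.

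Next I would pin the centre of that sphere down as $R:=(0,0,z)$, the foot of the perpendicular from $P=(x,y,z)$ to $\ell_C$. The forward implication is immediate from Corollary~\ref{cor:iso-conj-pedal-sphere}: if $P$ and $\rho(P)$ are isogonal conjugates then the pedal sphere is centred at the midpoint of $P$ and $\rho(P)$, which is $R$. For the converse, let $F_A,F_B,F_C,F_D$ be the feet of the perpendiculars from $P$ to the faces opposite $A,B,C,D$. Applying Lemma~\ref{lemma:equidist-proj} to the edge $CD$, and then --- by the same argument, since $\ell_C$ also passes through the midpoint of $AB$, is perpendicular to $AB$, and lies in a bisector plane of the dihedron $AB$ --- to the edge $AB$, we get for \emph{every} point $P$ that $RF_A=RF_B$ and $RF_C=RF_D$. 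Hence $R$ is equidistant from all four feet as soon as $RF_A=RF_C$; for such $P$ the feet are in general position (non‑coplanar), so $R$ is the centre of the pedal sphere of $P$, and the degenerate positions are absorbed by the limit convention for the pedal sphere. Combining the two directions, $\{P,\rho(P)\}$ is an isogonal conjugate pair iff $RF_A=RF_C$.

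It remains to compute. From \eqref{eq:embed-tetr} the faceplane opposite $A$ is $\tfrac xa-\tfrac yb-\tfrac zc=1$ and the one opposite $C$ is $-\tfrac xa-\tfrac yb+\tfrac zc=1$, with normal vectors of common squared length $K:=\tfrac1{a^2}+\tfrac1{b^2}+\tfrac1{c^2}$. Projecting $P$ onto these two planes, subtracting $R$ and simplifying gives
\[
RF_A^{2}-RF_C^{2}=\frac{4}{K}\left(\frac{xy}{ab}+\frac{z}{c}\right),
\]
so the locus in question is the surface $cxy+abz=0$, i.e.\ $z=-\dfrac{c}{ab}\,xy$, which is a non‑degenerate hyperbolic paraboloid since $a,b,c\ne 0$. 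As a sanity check it contains $O$ and the four vertices, as it should, since $O$ is its own isogonal conjugate while $\{A,B\}$ and $\{C,D\}$ are isogonal conjugate pairs, all three pairs being symmetric in $\ell_C$.

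The step I expect to be delicate is the middle one. A priori ``the centre of the pedal sphere of $P$ lies on $\ell_C$'' reads as two scalar conditions, which would describe a curve rather than a surface; the role of Lemma~\ref{lemma:equidist-proj} is precisely to show that two of these conditions, $RF_A=RF_B$ and $RF_C=RF_D$, hold identically, leaving a single genuine equation --- this is what forces the locus to be two‑dimensional. Everything else (Corollaries~\ref{cor:equiv-iso-conj-constr} and~\ref{cor:iso-conj-pedal-sphere}, Lemma~\ref{lemma:equidist-proj}) is already available, and the concluding algebra is routine.
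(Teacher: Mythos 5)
Your proof is correct, and it reaches the same surface \(z=-\frac{c}{ab}xy\) by a genuinely leaner route than the paper's. The decisive extra observation you make is that the half-turn \(\rho\) about \(\ell_C\) is a \emph{symmetry of the tetrahedron} (it swaps \(A\leftrightarrow B\) and \(C\leftrightarrow D\)), so the pedal sphere of \(\rho(P)\) is the \(\rho\)-image of the pedal sphere of \(P\); combined with Corollary~\ref{cor:equiv-iso-conj-constr} this turns the whole statement into the single condition ``the centre of the pedal sphere of \(P\) is the foot \(R\) of \(P\) on \(\ell_C\)'', which Lemma~\ref{lemma:equidist-proj} (applied to both \(CD\) and \(AB\), exactly as the paper also does) reduces to the one scalar equation \(RF_A=RF_C\). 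The paper instead splits the theorem into two inclusions: for the forward one it runs a separate geometric argument — the pedal circles in two faces give two right cylinders, whose sections by the plane through \(P,Q\) perpendicular to \(\ell\) are two congruent ellipses \(\varepsilon_1,\varepsilon_2\) with \(\{P,Q\}\) an antipodal pair of \(\varepsilon_1\cap\varepsilon_2\); subtracting the ellipse equations yields \(xy=\mathrm{const}\) in each level plane. Its converse direction is then essentially your computation verbatim (the paper computes \(M'P_B^2-M'P_C^2=4\bigl(\frac{x_0y_0}{ab}+\frac{z_0}{c}\bigr)\), matching your \(RF_A^2-RF_C^2=\frac4K\bigl(\frac{xy}{ab}+\frac zc\bigr)\) up to the normalising factor). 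So your version buys brevity and a unified ``iff'' chain — in effect you noticed that the paper's converse computation already proves the forward inclusion, making the ellipse argument logically redundant — while the paper's forward argument buys the extra geometric picture of the conjugate pair as opposite intersection points of two explicit ellipses. Two small points to tighten: you assert without proof that for points satisfying \(RF_A=RF_C\) the four feet are non-coplanar (the paper is equally silent on this degenerate locus, and a cleaner fix for both is to invoke Corollary~\ref{cor:iso-conj-pedal-sphere}, which exhibits the pedal sphere of \(P\) as a genuine sphere centred at the midpoint of \(P\) and its conjugate); and you should note explicitly that Lemma~\ref{lemma:equidist-proj} applies to the edge \(AB\) because \(\ell_C\) lies in the bisector plane of that dihedron too — which you do flag, and which follows from property~\ref{ii}.
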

\begin{proof}
    Without loss of generality we may assume that \(\ell=\ell_C\). Let \(P\) and \(Q\) be isogonal conjugate points symmetric in \(\ell\). Then their midpoint \(M\) lies on \(\ell\). According to Corollary~\ref{cor:iso-conj-pedal-sphere} the pedal spheres of \(P\) and \(Q\) coincide and have the center \(M\). Denote this sphere by \(\Gamma\).
    
    Since \(M\) lies on the bisector of dihedron \(CD\) the circles \(\gamma_A\) and \(\gamma_B\) cut from \(\Gamma\) by the planes \(BCD\) and \(ACD\), respectively, are symmetric in the bisector of dihedron \(CD\).

    The projections of \(P\) and \(Q\) on \(BCD\) lie on \(\gamma_A\) so \(P\) and \(Q\) lie on the straight cylinder \(\mathfrak C_A\) based on \(\gamma_A\). Similarly \(P\) and \(Q\) lie on the straight cylinder \(\mathfrak C_B\) based on \(\gamma_B\). On the other hand \(P\) and \(Q\) lie on a plane \(\pi\) perpendicular to \(\ell\). Thus \(P\) and \(Q\) lie on the ellipses \(\pi\cap \mathfrak C_A\) and \(\pi\cap \mathfrak C_B\). However these ellipses coincide since they are symmetric in \(M\), both of them have the center \(M\) and minor axes parallel to \(CD\). Name this ellipse \(\varepsilon_1\).

    Repeating the reasoning above for the dihedron \(AB\) we find out that \(P\) and \(Q\) lie on the ellipse \(\varepsilon_2\) defined similarly. Hence \(P\) and \(Q\) are an opposite pair of the points of intersection \(\varepsilon_1\cap \varepsilon_2\).

    Now recall the embedding~\eqref{eq:embed-tetr}. Then \(M=(0, 0, z_0)\) for some \(z_0\in\mathbb R\). Note that \(\ell\) coincides with \(Oz\) and the other two bimedians of \(ABCD\) coincide with \(Ox\) and \(Oy\).

    % We will fix \(M\) and consider the spheres with center \(M\) of variable radius \(r\). For each \(r\) we will find the intersection \(\varepsilon_1 \cap \varepsilon_2\). Thus we will find all the isogonal conjugate pairs on the plane \(z=z_0\) which are symmetric in \(\ell\).

    % The plane \(ACD\) is given by
    % \[\frac{x}{a}-\frac{y}{b}+\frac{z}{c}+1=0\]
    % and \(\Gamma\) is given by
    % \[x^2+y^2+(z-z_0)^2=r^2.\]
    % \(\gamma_B\) is given by the unity of these equations.

    Let \(d\) be the distance from \(O\) to faces of \(ABCD\). Then it is easy to see that the distances \(d_1\) and \(d_2\) from \(M\) to \(ACD\) and \(ABC\), respectively, are \(\displaystyle\left|\frac{z_0+c}{c}\right|d\) and \(\displaystyle\left|\frac{z_0-c}{c}\right|d\).

    If \(r_1\) and \(r_2\) are the radii of \(\gamma_B\) and \(\gamma_C=\Gamma\cap ABD\), respectively, then
    \[r_1=\sqrt{r^2-d_1^2},\quad r_2=\sqrt{r^2-d_2^2}.\]

    Clearly the minor axis of \(\varepsilon_1\) is equal to \(r_1\), while its major axis is \(\displaystyle \frac{r_1}{\sin\theta}\) where \(\theta\) is half the angle of dihedron \(CD\) (or equivalently \(AB\)).

    Note that the minor axis of \(\varepsilon_1\) is parallel to \(CD\). Thus \(\varepsilon_1\) can be obtained from the ellipse
    \[\begin{cases}
        \displaystyle\frac{x^2}{r_1^2/\sin^2\theta} + \frac{y^2}{r_1^2} = 1\\
        z=z_0
    \end{cases}\]
    by rotating it around \(Oz\) in the negative direction (from \(y\) to \(x\)) in the angle \(\varphi = \arctan\left(\frac{a}{b}\right)\). This gives the following equations for \(\varepsilon_1\):
    \[\begin{cases}
        (x\cos\varphi - y\sin\varphi)^2\sin^2\theta +(x\sin\varphi+y\cos\varphi)^2 = r_1^2\\
        z=z_0
    \end{cases}.\]
    Similarly \(\varepsilon_2\) is given by
    \[\begin{cases}
        (x\cos\varphi + y\sin\varphi)^2\sin^2\theta +(-x\sin\varphi+y\cos\varphi)^2 = r_2^2\\
        z=z_0
    \end{cases}.\]
    Subtracting these equations we get the following equations which hold for the points of \(\varepsilon_1\cap \varepsilon_2\):
    \begin{equation}
    \label{eq:intersect-ellipses}
        \begin{cases}
            -4xy\sin\varphi\cos\varphi\cos^2\theta = r_2^2-r_1^2 \\
            z=z_0
        \end{cases}.
    \end{equation}
    We have
    \[\sin\varphi=\frac{a}{\sqrt{a^2+b^2}},\quad
    \cos\varphi=\frac{b}{\sqrt{a^2+b^2}}\]
    \begin{align*}
        r_2^2-r_1^2 &= d_1^2-d_2^2 \\
        &= \frac{(z_0+c)^2-(z_0-c)^2}{c^2}d^2 = \frac{4z_0}{c}d^2.
    \end{align*}
    Using these, as well as our calculated values in Proposition~\ref{prop:quant-in-iso-tetr} we get
    \[\cos^2\theta = 1-\frac{d^2}{c^2}=\frac{c^2(a^2+b^2)}{a^2b^2+b^2c^2+c^2a^2}\]
    and~\eqref{eq:intersect-ellipses} simplifies to
    \[\begin{cases}
        \displaystyle-xy=\frac{ab}{c}z_0 \\
        z = z_0
    \end{cases}.\]
    Now letting \(z_0\) vary we get the hyperbolic paraboloid \(\mathcal H\) containing all the isogonal conjugate pairs \(P\) and \(Q\) symmetric in \(\ell\):
    \[z=-\frac{c}{ab}xy.\]

    However we still need to show that any pair of points \(P'\) and \(Q'\) on \(\mathcal H\) symmetric in \(\ell\) are isogonal conjugates. To this end we will use the equivalence of isogonal conjugate points from Corollary~\ref{cor:equiv-iso-conj-constr}.

    Let \(P_A,\ P_B,\ P_C,\ P_D\) be the projections of \(P'\) on \(BCD\), \(CDA\), \(DAB\), \(ABC\), respectively. Similarly define \(Q_A,\ Q_B,\ Q_C,\ Q_D\). Also let \(M'\) be the projection of \(P'\) (or equivalently of \(Q'\)) on \(\ell\).
    
    According to Lemma~\ref{lemma:equidist-proj} \(P_AM'=M'P_B\). In force of symmetry in \(\ell_C\) we get that all the four points \(P_A, P_B, Q_A, Q_B\) are equidistant from \(M'\). Similarly \(P_C, P_D, Q_C, Q_D\) are equidistant from \(M'\) too. So if we prove that \(P_BM'=P_CM'\) then the pedal spheres of \(P'\) and \(Q'\) will coincide and we will be done by Corollary~\ref{cor:equiv-iso-conj-constr}.

    We have the following equations for the faceplanes:
    \[ACD: \frac{x}{a}-\frac{y}{b}+\frac{z}{c}+1=0, \qquad
    ABD: \frac{x}{a}+\frac{y}{b}-\frac{z}{c}+1=0.\]
    If \(P'=(x_0, y_0, z_0)\) then
    \[M'P_B^2 = \left(x_0 - \frac{p+q}{ar}\right)^2 +
    \left(y_0 + \frac{p+q}{br}\right)^2 +
    \left(\frac{p+q}{cr}\right)^2,\]
    \[M'P_C^2 = \left(x_0 - \frac{-p+q}{ar}\right)^2 +
    \left(y_0 - \frac{-p+q}{br}\right)^2 +
    \left(\frac{-p+q}{cr}\right)^2,\]
    where
    \[p=-\frac{y_0}{b}+\frac{z_0}{c},\qquad
    q=\frac{x_0}{a}+1,\qquad
    r=\frac{1}{a^2}+\frac{1}{b^2}+\frac{1}{c^2}.\]
    Then
    \[\frac{M'P_B^2-M'P_C^2}{4} = -\frac{p}{a}\left(x_0-\frac{q}{ar}\right) +
    \frac{q}{b}\left(y_0+\frac{p}{br}\right) +
    \frac{p}{c}\cdot \frac{q}{cr}\]
    \[=-\frac{x_0}{a}p+\frac{y_0}{b}q+pq\]
    \[= -\frac{x_0}{a}\left(-\frac{y_0}{b}+\frac{z_0}{c}\right) +
    \frac{y_0}{b}\left(\frac{x_0}{a}+1\right)
    + \left(-\frac{y_0}{b}+\frac{z_0}{c}\right)\left(\frac{x_0}{a}+1\right)\]
    \[=\frac{x_0y_0}{ab}+\frac{z_0}{c}.\]
    Since \(P'=(x_0, y_0, z_0)\) lies on \(\mathcal H\) the last expression is zero, as needed.
\end{proof}

\begin{rem}
    Note that the other two bimedians different from \(\ell\) lie on \(\mathcal H\) (\(z=0\) yields \(x=0\) or \(y=0\)). Also note that the vertices of \(ABCD\) lie on \(\mathcal H\) as well.
\end{rem}

Denote by \(\mathcal H_A, \mathcal H_B, \mathcal H_C\) the hyperbolic paraboloids defined above corresponding to bimedians \(\ell_A, \ell_B, \ell_C\).

\begin{prop}
\label{prop:hparab-sphere-touch}
    For any point \(P\in \Omega\) the circles \(ABP\) and \(CDP\) touch iff \(P\in \mathcal H_C\).
\end{prop}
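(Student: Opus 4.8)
The plan is to reduce the statement to the tangency of a single line to $\Omega$, and then to verify that condition by the coordinate method of this section.

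First I would note that, since $A,B,C,D$ and $P$ all lie on $\Omega$, the circle $ABP$ is exactly $\Omega\cap\pi_1$ and the circle $CDP$ is exactly $\Omega\cap\pi_2$, where $\pi_1$ is the plane $ABP$ and $\pi_2$ the plane $CDP$ (here $P$ is not a vertex, else one of these circles degenerates). As $AB$ and $CD$ are skew, $\pi_1\neq\pi_2$, so $m:=\pi_1\cap\pi_2$ is a line through $P$; it is the unique line through $P$ meeting both edgelines $AB$ and $CD$, since any line through $P$ meeting $AB$ lies in $\pi_1$ and any line through $P$ meeting $CD$ lies in $\pi_2$. Neither $\pi_1$ nor $\pi_2$ is tangent to $\Omega$ (each meets $\Omega$ in two vertices besides $P$), so the tangent to circle $ABP$ at $P$ is the line $\pi_1\cap T_P\Omega$ and the tangent to circle $CDP$ at $P$ is $\pi_2\cap T_P\Omega$, where $T_P\Omega$ is the tangent plane of $\Omega$ at $P$. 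These two tangent lines coincide --- that is, the circles touch --- exactly when both of them equal $m$, which happens iff $m\subseteq T_P\Omega$. Thus the proposition is equivalent to: \emph{$m$ is tangent to $\Omega$ at $P$ iff $P\in\mathcal H_C$}; and since $P\in\Omega$, tangency of $m$ just means $m$ meets $\Omega$ at no point other than $P$.

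Next I would pass to the embedding~\eqref{eq:embed-tetr}, so that $O$ is the origin, $\Omega$ has radius $\rho=\sqrt{a^2+b^2+c^2}$, and $\ell_C$ is the $z$-axis. Writing $P=(x_0,y_0,z_0)$ and parametrizing line $AB$ as $(sa,-sb,c)$ and line $CD$ as $(ua,ub,-c)$, I would impose that $P$ be collinear with the corresponding points of $AB$ and $CD$; comparing the three coordinates lets one solve for $s$ and $u$ as explicit rational functions of $P$ (with denominators $c+z_0$ and $c-z_0$) and yields a direction vector $\vec v=\big(a(s-u),\,-b(s+u),\,2c\big)$ of $m$. Substituting the parametrization $P+\lambda\vec v$ into $x^2+y^2+z^2=\rho^2$ and using $|P|^2=\rho^2$, the equation reduces to $\lambda\,(2\,\vec v\cdot P+\lambda\,|\vec v|^2)=0$ (here $P$ is identified with its position vector), so $m$ is tangent to $\Omega$ at $P$ iff $\vec v\cdot P=0$.

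Finally I would expand $\vec v\cdot P=a(s-u)x_0-b(s+u)y_0+2cz_0$; after clearing the denominator $c^2-z_0^2$ and applying the circumsphere relation $x_0^2+y_0^2+z_0^2=a^2+b^2+c^2$, the expression should collapse to
\[\vec v\cdot P=-\frac{2c\,(a^2+b^2)}{c^2-z_0^2}\Big(z_0+\tfrac{c}{ab}\,x_0y_0\Big),\]
so that $\vec v\cdot P=0$ iff $z_0=-\tfrac{c}{ab}x_0y_0$, i.e. iff $P\in\mathcal H_C$ by Theorem~\ref{theorem:bimed-hyperb-parab}. The cases $z_0=\pm c$ excluded above (where $m$ runs parallel to $AB$ or to $CD$) I would settle by a separate, shorter computation: for $z_0=c$ one has $\pi_1=\{z=c\}$ and $m$ is the line through $P$ of direction $(a,b,0)$, the relation $P\in\Omega$ forces $x_0^2+y_0^2=a^2+b^2$, and one checks directly that $ax_0+by_0=0$ iff $x_0y_0=-ab$ (for $P$ not a vertex), i.e. iff $P\in\mathcal H_C$; the case $z_0=-c$ is symmetric. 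I expect the only real obstacle to be bookkeeping in the last step --- arranging the expansion so that the cancellation produced by $P\in\Omega$ is visible; everything else is the standard dictionary between circles on a sphere, the planes carrying them, and tangent planes.
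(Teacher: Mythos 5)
Your proposal is correct and follows essentially the same route as the paper: both reduce tangency of the two circles to the condition that the line \(ABP\cap CDP\) be perpendicular to \(OP\), and then verify this in the embedding~\eqref{eq:embed-tetr} using \(P\in\Omega\). The only difference is mechanical --- the paper tests whether \(OP\) lies in the span of the two plane normals via a \(3\times 3\) determinant, whereas you compute the direction of the common transversal explicitly and take a dot product (your separate treatment of the positions \(z_0=\pm c\), which the determinant formulation absorbs automatically, checks out).
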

\begin{proof}
    Recall the embedding~\eqref{eq:embed-tetr}. Let \(P=(x, y, z)\).

    We need to check if the line \(ABP\cap CDP\) touches \(\Omega\), i.e. is perpendicular to \(OP\). Or, equivalently, whether \((x, y, z)\) lies in the linear span of the normal vectors of \(ABP\) and \(CDP\). It is not hard to check that these normals are
    \[\begin{bmatrix}
        b(z-c) \\[0.2cm]
        a(z-c) \\[0.2cm]
        -(bx+ay)
    \end{bmatrix} \quad \text{and} \quad
    \begin{bmatrix}
        b(z+c) \\[0.2cm]
        -a(z+c) \\[0.2cm]
        -bx+ay
    \end{bmatrix}.\]
    Their linear span coincides with the linear span of their half-sum and half-difference:
    \[\begin{bmatrix}
        bz \\[0.2cm]
        -ac \\[0.2cm]
        -bx
    \end{bmatrix}, \quad
    \begin{bmatrix}
        bc \\[0.2cm]
        -az \\[0.2cm]
        ay
    \end{bmatrix}.\]
    We need to check the singularity of matrix
    \[M=\begin{pmatrix}
        bz & bc & x\\[0.2cm]
        -ac & -az & y\\[0.2cm]
        -bx & ay & z
    \end{pmatrix}.\]
    Taking into account the fact that \(P\in \Omega\), i.e. \(x^2+y^2+z^2=a^2+b^2+c^2\) we get
    \begin{align*}
        \det(M) &= -abz(x^2+y^2+z^2-c^2)-xyc(a^2+b^2) \\
        &=-(a^2+b^2)(abz+xyc)
    \end{align*}
    which is zero iff \(abz+xyc=0\), that is when \(P\in \mathcal H_C\).
\end{proof}
\begin{rem}
    Obviously, similar results hold for \(\mathcal H_A\) and \(\mathcal H_B\) too.
\end{rem}

\section{Isogonal conjugation on the circumsphere}
\label{sect:iso-conj-sphere}

In this section too \(ABCD\) is an isosceles tetrahedron. All the notations are preserved.
\begin{theorem}
\label{theorem:main}
    Let \(X\notin\{A, B, C, D\}\) be a point on \(\Omega\). Then the isogonal conjugate of \(X\) with respect to \(ABCD\) lies on \(\Omega\).
\end{theorem}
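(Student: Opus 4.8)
The plan is to exhibit \(Y\) as the centre of a sphere which meets \(\Omega\) in a circle and makes equal angles with \(\Omega\) and with the plane of that circle, and then to invoke Proposition~\ref{prop:bisector-sphere}. Fix \(X\in\Omega\) with \(X\notin\{A,B,C,D\}\) and let \(Y\) be its isogonal conjugate. By the proof of Theorem~\ref{theorem:iso-conj} the sphere \(\Gamma_0\) through the four reflections of \(X\) in the faceplanes of \(ABCD\) has centre \(Y\) (and by Remark~\ref{rem:iso-conj} its image under the homothety centred at \(X\) with coefficient \(\tfrac12\) is the common pedal sphere of \(X\) and \(Y\)). So it suffices to check the hypotheses of Proposition~\ref{prop:bisector-sphere} for the pair \((\Gamma_0,\Omega)\): that they meet in a circle \(\sigma\), and that \(\Gamma_0\) makes equal angles with \(\Omega\) and with the plane of \(\sigma\).

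The feature of the isosceles tetrahedron that drives the argument is that the bisector plane \(\beta_e\) of each dihedron \(e\) passes through the circumcentre \(O\) --- so that \(\beta_e\) is simply the plane through \(O\) and the edge \(e\) --- a fact recorded in the proof of part~(ii) of the Proposition in Section~\ref{sect:iso-tetr}. Hence the reflection \(r_e\) in \(\beta_e\) fixes \(\Omega\), fixes the great circle \(\sigma_e=\Omega\cap\beta_e\) pointwise, and, by the very definition of the isogonal conjugate, carries the section \(\Omega\cap(\text{plane }eX)\) onto \(\Omega\cap(\text{plane }eY)\). Being an isometry, \(r_e\) then shows that on \(\Omega\) the great circle \(\sigma_e\) bisects the two circles cut out by the planes \(eX\) and \(eY\) (all three of them passing through the two endpoints of \(e\)) --- this is the spherical counterpart of Proposition~\ref{prop:iso-conj-bisect-circ}(i). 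Feeding in Proposition~\ref{prop:iso-conj-bisect-circ}(ii) and Proposition~\ref{prop:hom-centr-bisect} should produce, edge by edge, the external homothety centre that promotes these great-circle bisections to equalities of angles between honest spheres; and Proposition~\ref{prop:angles-preserved}, applied through the inversion centred at \(X\) --- under which \(\Omega\) becomes a plane and the faceplanes become spheres through \(X\) whose pairwise angles reproduce the three dihedral angles of \(ABCD\) --- is the device that keeps these equalities intact as one moves between \(\Omega\), the faceplanes and \(\Gamma_0\). Running this over the three pairs of opposite edges should pin down the angle between \(\Gamma_0\) and \(\Omega\) and leave Proposition~\ref{prop:bisector-sphere} to finish.

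The step I expect to be the genuine obstacle is making that edge-by-edge angle accounting airtight: identifying precisely which circles and spheres play the roles of \(\gamma_M,\gamma_N\) and of \(\Omega_1,\Omega_2\) in Propositions~\ref{prop:iso-conj-bisect-circ} and~\ref{prop:hom-centr-bisect}, and verifying that the ``equal angles'' hypothesis of Proposition~\ref{prop:bisector-sphere} genuinely holds rather than merely its equivalent conclusion. A related nuisance is the degenerate position in which, for a pair \(e,e'\) of opposite edges, the circles \(\Omega\cap(\text{plane }eX)\) and \(\Omega\cap(\text{plane }e'X)\) are tangent at \(X\); by Proposition~\ref{prop:hparab-sphere-touch} this is exactly the case \(X\in\mathcal H_A\cup\mathcal H_B\cup\mathcal H_C\), and it will presumably need a separate limiting or direct treatment.
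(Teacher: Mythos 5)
Your proposal correctly assembles the paper's toolbox, but the actual content of the proof is missing. The reduction you set up --- show that \(\Gamma_0\) (the sphere through the reflections of \(X\) in the faces, centred at \(Y\)) meets \(\Omega\) in a circle \(\sigma\) and makes equal angles with \(\Omega\) and with the plane of \(\sigma\), then apply Proposition~\ref{prop:bisector-sphere} --- is sound as far as it goes, but it is essentially a restatement of the conclusion: for intersecting spheres the equal-angle condition is equivalent to the centre lying on \(\Omega\), so nothing is gained until that condition is actually verified. That verification is precisely the part you leave as ``should produce'' and ``should pin down''. Worse, the configuration you would have to control is global: \(\sigma\) lies in the radical plane of \(\Gamma_0\) and \(\Omega\), which bears no evident relation to any edge or face of the tetrahedron, whereas every tool at your disposal (the reflection in a bisector plane \(\beta_e\), Proposition~\ref{prop:iso-conj-bisect-circ}, Proposition~\ref{prop:hom-centr-bisect}) is a statement about one edge at a time. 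It is not clear how six per-edge bisection facts are supposed to aggregate into a single equal-angle statement about the pair \((\Gamma_0,\Omega)\); you also never check that \(\Gamma_0\) and \(\Omega\) intersect at all.

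The paper avoids this aggregation problem entirely. It does not prove \(Y\in\Omega\) head-on: it \emph{defines} \(Y\) as the second intersection of \(\Omega\) with the line joining \(D\) to the isogonal conjugate of \(X\), so that isogonality in the three dihedrons at \(D\) is automatic, and then verifies isogonality in \(AB\), \(BC\), \(CA\) one dihedron at a time. For the dihedron \(AB\) it inverts at the vertex \(D\) (not at \(X\)), reducing to the planar Proposition~\ref{prop:iso-conj-bisect-circ} for \(X_1,Y_1\) in the triangle \(A_1B_1C_1\); Proposition~\ref{prop:bisector-sphere} is applied not to \(\Gamma_0\) but to the inverted image \(A_1B_1O_1D\) of the bisector plane \(ABO\), whose intersection with the sphere \(A_1B_1C_1D\) is manifestly the circle \(A_1B_1D\), and whose equal-angle hypothesis is inherited directly from the bisector property via Proposition~\ref{prop:angles-preserved}. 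This locates the centre \(N\) of \(A_1B_1O_1D\) as the midpoint of arc \(A_1C_1B_1\), after which Proposition~\ref{prop:iso-conj-bisect-circ}(ii) and Proposition~\ref{prop:hom-centr-bisect} yield the equal angles of \(ABO\) with \(ABX\) and \(ABY\). The tangency cases \(X\in\mathcal H_A\cup\mathcal H_B\cup\mathcal H_C\) that worry you do not require separate treatment in this scheme. To salvage your plan you would need an explicit computation of the angle between \(\Gamma_0\) and \(\Omega\); as written, the proposal is a strategy sketch rather than a proof.
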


\begin{proof}
    % Let \(O\) be the center of \(\Omega\).
    Let \(Y\) be the second intersection point of \(\Omega\) and the line joining \(D\) with the isogonal conjugate of \(X\). Points \(X\) and \(Y\) lie in isogonal planes with respect to each of the dihedrons \(DA, DB, DC\). We need to prove that they lie in isogonal planes in the dihedrons \(AB, BC, CA\) as well.

    Invert with center \(D\); see Figure~\ref{fig:inverted}. For any point \(Z\) of space denote its image by \(Z_1\). 
    
    Clearly \(A_1, B_1, C_1, X_1, Y_1\) are coplanar (they lie in the image plane of \(\Omega\)). By property~\ref{i} of isosceles tetrahedron we get
    \begin{equation*}
    \label{angle-eqs}
        \angle B_1A_1D= \angle ABD=
    \angle ACD = \angle C_1A_1D
    \tag{*}
    \end{equation*}
    and two other similar equalities.
    Taking into account this, as well as the fact that \(X_1\) and \(Y_1\) lie in isogonal planes in dihedrons
    \(DA_1, DB_1, DC_1\), we deduce that \(X_1\) and \(Y_1\) are isogonal conjugates in \(A_1B_1C_1\).

    Recall that by property~\ref{ii} \(O\) is also the incenter of \(ABCD\). This means that the plane \(eO\) is the bisector of the dihedron \(e\) for any edge \(e\) of \(ABCD\).

    It is easy to see that \(O_1\) is the reflection of \(D\) in \(A_1B_1C_1\) (consider the diametrically opposite point to \(D\)).
    Angles between spheres are preserved under inversion (Proposition~\ref{prop:angles-preserved}) so since the plane \(ABO\)
    makes equal angles with \(ABD\) and \(ABC\), the sphere \(A_1B_1O_1D\) makes equal
    angles with the plane \(A_1B_1D\) and the sphere \(A_1B_1C_1D\).
    According to Proposition~\ref{prop:bisector-sphere} this is possible only in the case when the circumcenter \(N\) of \(A_1B_1O_1D\) lies
    on the sphere \(A_1B_1C_1D\).
    % \footnote{To check this one may consider a section through the circumcenters of \(A_1B_1O_1D\) and \(A_1B_1C_1D\).}
    Tetrahedron \(A_1B_1O_1D\) is symmetric with respect to the
    plane \(A_1B_1C_1\) so \(N\) lies on the circle \(A_1B_1C_1\).

    Note that \(N\) is the midpoint of arc \(A_1C_1B_1\). Indeed, \(NA_1=NB_1\) so \(N\) is the midpoint of either one of the two arcs \(A_1B_1\). By property~\ref{iii} the angles in~\eqref{angle-eqs} are acute. This means that the projection of \(D\) on \(A_1B_1C_1\) lies on the triangle \(A_1B_1C_1\) so the dihedrons \(A_1B_1, B_1C_1\) and \(C_1A_1\) in \(DA_1B_1C_1\) are acute. Thus \(N\) and \(C_1\) lie in the same side of \(A_1B_1\) and \(N\) is on the arc \(A_1B_1C_1\).

    \begin{figure}[H]
        \includegraphics[width=\textwidth]{figure-sources/inverted.tex}
        \caption{}
        \label{fig:inverted}
    \end{figure}
    
    According to Proposition~\ref{prop:iso-conj-bisect-circ} \(N\) is the external homothety center of circles \(A_1B_1X_1\) and \(A_1B_1Y_1\). On the other hand, \(NA_1=NB_1=NC_1\) so the line passing through the circumcenters
    of \(A_1B_1DX_1\) and \(A_1B_1DY_1\) passes through \(N\) too.
    This and the previous judgement lead to the conclusion that \(N\) is the external
    homothety center of the spheres \(A_1B_1DX_1\) and \(A_1B_1DY_1\) too.
    
    In force of Proposition~\ref{prop:hom-centr-bisect} the sphere \(A_1B_1O_1D\) with center \(N\) passing through \(A_1\) makes
    equal angles with the spheres \(A_1B_1DX_1\) and \(A_1B_1DY_1\). Therefore its preimage
    plane \(ABO\) also makes equal angles with the planes \(ABX\) and \(ABY\).
    
    Similarly \(X\) and \(Y\) are in isogonal planes with respect to the dihedrons \(BC\) and \(AC\) too, whence the conclusion follows.
\end{proof}

In his proof of Theorem~\ref{theorem:main} Ilya Bogdanov, professor at Moscow Institute of Physics and Technology, found the following interesting construction  of isogonal conjugate points on the circumsphere of isosceles tetrahedron (I. Bogdanov, personal communication, January 17, 2020):

\begin{prop}
    Let \(X\notin\{A, B, C, D\}\) be a point on \(\Omega\).
    Let \(X_A\), \(X_B\), \(X_C\) be the second intersection points of circles \(XDA\) and \(XBC\), \(XDB\) and \(XCA\), \(XDC\) and \(XAB\), respectively. Then the reflections of \(X_A\) in \(\ell_A\), \(X_B\) in \(\ell_B\) and \(X_C\) in \(\ell_C\) coincide with the isogonal conjugate of \(X\).
\end{prop}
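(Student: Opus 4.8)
The plan is to reduce the whole statement to a single identity and then prove that identity by a short argument about compositions of plane reflections. Write $\rho_C$ for the reflection in the bimedian $\ell_C$ and $X^{*}$ for the isogonal conjugate of $X$. The argument below for $\ell_C$ applies verbatim, with the edges relabelled, to $\ell_A$ and $\ell_B$, so it is enough to prove that $\rho_C(X_C)=X^{*}$; this at the same time shows that the three reflections all produce one and the same point $X^{*}$. Since $\rho_C$ is an involution, the claim is equivalent to $X_C=\rho_C(X^{*})$. Recall that $X_C$ is the second common point of the circles $XDC$ and $XAB$, both of which lie on $\Omega$; moreover $\rho_C(X^{*})$ lies on $\Omega$ as well, since $X^{*}\in\Omega$ by Theorem~\ref{theorem:main} and $\rho_C$, fixing the center $O$, maps $\Omega$ to itself. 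Hence it suffices to show that $\rho_C(X^{*})$ lies in the plane $CDX$ and in the plane $ABX$: it will then lie on both of these circles, and, being distinct from $X$ outside a degenerate case noted below, it must coincide with $X_C$.

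The crux is the way $\rho_C$ acts on the pencils of planes through the edges $CD$ and $AB$. First I would record the standard fact --- transparent from the embedding~\eqref{eq:embed-tetr}, in which $\ell_C$ is the $z$-axis and the edges $AB$ and $CD$ are horizontal --- that in an isosceles tetrahedron the bimedian $\ell_C$ is the common perpendicular of the opposite edges $AB$ and $CD$, passing through their midpoints and through $O$. Consequently $\ell_C$ lies in the bisector plane $\beta_{CD}$ of the dihedron $CD$ (which contains the edge $CD$ and the incenter $O$) and also in the perpendicular bisector plane of the segment $CD$; these two planes are distinct, perpendicular, and meet along $\ell_C$, so $\rho_C$ is the composition of the reflections in them (the composition of the reflections in two perpendicular planes through a line is the half-turn about that line, i.e.\ the reflection in it). Now the reflection in the perpendicular bisector plane of $CD$ fixes, as a set, every plane through the line $CD$, since each such plane is perpendicular to that reflecting plane --- it contains $CD$, which is perpendicular to it. Therefore $\rho_C$ and the reflection in $\beta_{CD}$ induce the same map on the pencil of planes through $CD$. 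By the definition of isogonal conjugation the reflection in $\beta_{CD}$ interchanges the plane $CDX$ with the plane $CDX^{*}$; hence $\rho_C$ carries the plane $CDX^{*}$ onto the plane $CDX$, and since $X^{*}$ lies in the former, $\rho_C(X^{*})$ lies in the latter. Running the same reasoning for the edge $AB$ --- with the bisector plane of the dihedron $AB$ and the perpendicular bisector plane of the segment $AB$ --- shows likewise that $\rho_C(X^{*})$ lies in the plane $ABX$, which finishes the core of the proof.

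Two routine points would remain. First, the degenerate case in which the circles $XDC$ and $XAB$ are tangent at $X$, so that there is no honest ``second'' intersection point: by Proposition~\ref{prop:hparab-sphere-touch} this happens precisely when $X\in\mathcal{H}_C$, and then, since $\mathcal{H}_C$ is symmetric in $\ell_C$, the pair $\{X,\rho_C(X)\}$ lies on $\mathcal{H}_C$ and is symmetric in $\ell_C$, so by the converse half of Theorem~\ref{theorem:bimed-hyperb-parab} it is an isogonal conjugate pair; thus $X^{*}=\rho_C(X)$ and the identity $\rho_C(X_C)=X^{*}$ holds under the natural convention $X_C:=X$ (and similarly for $X_A$ and $X_B$). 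Second, a few exceptional positions of $X$, such as $X$ on a face circumcircle --- where one of the planes ``$CDX^{*}$'' degenerates --- are dealt with by a limiting argument. I expect the only real obstacle to be the observation at the heart of the second paragraph: that $\rho_C$ agrees with the reflection in $\beta_{CD}$ on the planes through $CD$, and with the reflection in the bisector plane of the dihedron $AB$ on the planes through $AB$. This is exactly what turns the dihedral-angle description of $X^{*}$ into the two coplanarity conditions that characterize $X_C$; once it is in hand, the rest of the proof is immediate.
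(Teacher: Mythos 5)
Your proof is correct and follows essentially the same route as the paper's: both arguments rest on the observation that the half-turn about the bimedian induces the isogonal reflection on the pencils of planes through the two edges it bisects, both then pin the resulting point down to the two-point intersection of the circles $XDC$ and $XAB$ using Theorem~\ref{theorem:main}, and both dispose of the ambiguous (tangent) case via Theorem~\ref{theorem:bimed-hyperb-parab} and Proposition~\ref{prop:hparab-sphere-touch}. The only differences are presentational: you reflect $X^{*}$ forward into the planes $CDX$ and $ABX$ and carefully justify the key step by decomposing the half-turn into reflections in two perpendicular planes, whereas the paper reflects $X$ and $X_C$ the other way and simply asserts that the reflected planes are the isogonal ones.
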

\begin{proof}
    Let \(Y\) be the isogonal conjugate of \(X\). Let \(X'\) and \(X_A'\) be the reflections of \(X\) and \(X_A\), respectively, in \(\ell_A\). We will prove that \(Y=X_A'\). Similarly the reflections of \(X\) in \(\ell_B\) and \(\ell_C\) will coincide with \(Y\) too.
    
    Note that the planes \(DAXX_A\) and \(DAX'X_A'\), as well as \(BCXX_A\) and \(BCX'X_A'\) are pairs of isogonals. This means that \(Y\) lies on \(X'X_A'\). According to Theorem~\ref{theorem:main} \(Y\) also lies on \(\Omega\). Hence \(Y\) should coincide either with \(X'\) or \(X_A'\).

    If \(Y=X_A'\) then there is nothing to prove. Else \(Y=X'\). This means that \(Y\) and \(X\) lie on \(\mathcal H_A\). According to Proposition~\ref{prop:hparab-sphere-touch} \(X=X_A\) and \(Y=X'=X_A'\) as desired.
\end{proof}
\begin{rem}
    To prove this result Bogdanov used inversion with respect to one of the points \(\ell_A\cap \Omega\). Points \(A, B, C, D\) were mapped to the vertices of a parallelogram and angle chasing finished the proof. However we chose a different approach based on the results already proven.
\end{rem} 

\printbibliography

\end{document}